\documentclass[11pt, reqno]{amsart}
\usepackage{amsmath, amsthm, amscd, amsfonts, amssymb, graphicx, color}
\usepackage[bookmarksnumbered, colorlinks, plainpages]{hyperref}
\input{mathrsfs.sty}
\usepackage{hyperref}
\hypersetup{colorlinks=true, linkcolor=blue, anchorcolor=green,
citecolor=green, urlcolor=red, filecolor=magenta, pdftoolbar=true}



\newtheorem{theorem}{Theorem}[section]
\newtheorem{lemma}[theorem]{Lemma}

\newtheorem{corollary}[theorem]{Corollary}
\theoremstyle{definition}

\theoremstyle{remark}



 \def\AmSLaTeX{\leavevmode\hbox{$\cal A\kern-.2em\lower.376ex
 \hbox{$\cal M$}\kern-.2em\cal S$-\LaTeX}}
\topmargin = 0cm \oddsidemargin = 0cm \evensidemargin = 0cm
\textheight = 23cm \textwidth = 16cm

{\normalsize }
\def\u|{|\kern-0.1em|\kern-0.1em|}
\def\U|{\Big|\kern-0.1em\Big|\kern-0.1em\Big|}

\def\Tr{\hbox{Tr}\,}

\def\~{\hskip-2pt}

\def\<{\left\langle}
\def\>{\right\rangle}

\def\VV{\lower-0.1ex\hbox{$\ \begin{matrix}\vee\\[-2ex]\vee\end{matrix}\ $}}
\def\vv{\lower-0.2ex\hbox{$\ \begin{matrix}\wedge\\[-2ex]\wedge\end{matrix}\ $}}

\def\({\left(}
\def\){\right)}

\def\u|{|\kern-0.1em|\kern-0.1em|}
\def\U|{\Big|\kern-0.1em\Big|\kern-0.1em\Big|}

\numberwithin{equation}{section}
\def\s{\sharp}
\def\l{\lambda}

\def\Tr{\mathrm{Tr}}

\def\phi{\varphi}

\newcommand{\NORM}[1]{\left|\!\left|{#1}\right|\!\right|}

\begin{document}
\title[Jointly convex mappings ]{ Jointly  convex mappings related to the  Lieb's functional  and Minkowski type operator inequalities}

\author[M. Kian \MakeLowercase{and} Y. Seo ]{Mohsen Kian$^1$ \MakeLowercase{and} Yuki Seo$^2$ }

\address{$^1$ Mohsen Kian:\ \
Department of Mathematics,  University of Bojnord, P. O. Box
1339, Bojnord 94531, Iran}
\email{\textcolor[rgb]{0.00,0.00,0.84}{kian@ub.ac.ir}}

\address{$^2$ Yuki Seo:\ \
Department of Mathematics Education, Osaka Kyoiku University, Asahigaoka,Kashiwara, Osaka582-8582, Japan}
\email{\textcolor[rgb]{0.00,0.00,0.84}{yukis@cc.osaka-kyoiku.ac.jp}}

\subjclass[2010]{47A63, 47A56, 15A69}

\keywords{Jointly convex mapping, operator log-convex, operator concave, positive linear map, Minkowski inequality, trace functional }

\begin{abstract}
Employing the notion of operator log-convexity, we study joint concavity$/$ convexity of multivariable operator functions: $(A,B)\mapsto F(A,B)=h\left[ \Phi(f(A))\ \sigma\ \Psi(g(B))\right]$, where $\Phi$ and $\Psi$ are positive linear maps and $\sigma$ is an operator mean. As applications, we prove jointly concavity$/$convexity of matrix trace functions $\Tr\left\{ F(A,B)\right\}$. Moreover, considering positive multi-linear mappings in $F(A,B)$, our study of the joint concavity$/$ convexity of $(A_1,\cdots,A_k)\mapsto h\left[ \Phi(f(A_1),\cdots,f(A_k))\right]$ provides some generalizations and complement to  results of Ando and Lieb concerning  the concavity$/$ convexity of maps involving tensor product. In addition, we present Minkowski type operator inequalities for a unial positive linear map, which is an operator version of Minkowski type matrix trace inequalities under a more general setting than Carlen and Lieb, Bekjan, and Ando and Hiai.

\end{abstract} \maketitle

\section{Introduction}
The notion of convexity in the non-commutative setting  arises in the context of $C^*$-algebras and matrix theory and receives notable attention for applications for example in quantum mechanics. As a typical non-commutative $C^*$-algebra, assume that $\mathcal{B}(\mathscr{H})$ is the algebra of all bounded linear operators on a complex Hilbert space $ \mathscr{H}$ and ${\Bbb M}_n:=\mathbb{M}_n(\mathbb{C})$ is the complex matrix algebra.  Several class of convex mappings on operators are known. Some of them, such as operator convex (concave) functions,   use the continuous functional calculus to induce an operator mapping on the set of all selfadjoint operators from a continuous real function.
    Operator convex (concave) functions were introduced by L\"{o}wner \cite{Low} and Kraus \cite{Kr} as non-commutative extensions of real convex (concave) functions  and later they were  characterized by Hansen and Pedersen \cite{H-P}. They presented a non-commutative extension of the well-known Jensen inequality  for operator convex functions.  As a related notion,
    operator log-convex functions first appeared in a paper of Aujla et al. \cite{Au} and then  characterized by Ando and Hiai \cite{AH}. An operator Jensen inequality for operator log-convex functions was shown in \cite{KD}.  In the case of several variable operator mappings, the convexity (concavity) of   $(A,B)\mapsto A^p\otimes B^q$ for proper real numbers $p,q$ was studied by Ando \cite{An}.
     Effros and Hansen \cite{E-H} studied the convexity of some two variable mapping $(A,B)\mapsto F(A,B):\mathcal{B}_h(\mathscr{H})\times\mathcal{B}_h(\mathscr{H})\to\mathcal{B}_h(\mathscr{H})$, 
 where $\mathcal{B}_h(\mathscr{H})$ is the set of all selfadjoint operators in $\mathcal{B}(\mathscr{H})$. Moreover, recently discussed in \cite{Eb,E,E-H,Mx-Ki}, operator perspectives  are two variable operator functions defined by $g(A,B)=B^{1/2}f(B^{-1/2}AB^{-1/2})B^{1/2}, A,B>0$ for every continuous function $f$ on $(0,\infty)$.  If $f$ is a positive operator monotone function with $f(1)=1$, then the operator perspective reduces to the operator mean with the representing function $f$ in \cite{Ku-An}. If $f$ is operator convex, then the operator perspective  has jointly convexity \cite{Eb,E} and are of significant application in quantum information in \cite{HM}.
 In \cite{Ha-Mx-Nj}, the authors considered some operator mappings  related to the Jensen inequality.

 Another class of useful  convex mappings on operators are convex functionals. The  convexity of  complex (real) valued mapping $X\mapsto \mathrm{Tr} f(X)$  on ${\Bbb M}_n$,
    where  $f$ is a one-variable real convex function, $X$ is a Hermitian matrix and $\mathrm{Tr}(\cdot)$ denotes the canonical trace,  is due to von Neumann \cite{v-N}. In 2003, Hansen and Pedersen \cite{H-P2} investigated the convexity of this mapping in the case where $f$ is a several variable function.  It has also been shown in \cite{AH} that  the mapping $X\mapsto\log \omega(f(X))$ is convex for every state $\omega$, where $f$ is operator log-convex. In order to generalize this, the authors of \cite{KY}  showed that this remains valid when $\omega$ and $\log t$ are replaced with a positive linear mapping and an operator concave function, respectively.

    In the study of entropy in quantum mechanics, the well-known Lieb's theorem   asserts that for every $0\leq p\leq1$ and every  matrix $K$, the mapping $(A,B)\mapsto \mathrm{Tr} K^*A^{p}KB^{1-p}$ is jointly concave on positive semidefinite matrices $A,B$. There have been many works  devoted to study the convexity (concavity) of trace functions regarding the extensions of Lieb's result. Carlen and Lieb \cite{Crl-Lb,Crl-Lb2} determined those $q,p>0$ for which    the trace functional $A\mapsto \mathrm{Tr}\left(B^*A^pB\right)^{q/p}$ is convex  (concave)  on positive semidefinite matrices, where $B$ is a fixed  matrix. \par
 We want to treat all these different situations in a more unified way.
 In the present paper, we consider operator mappings of the following types:
\begin{enumerate}
\item $F_1(A,B)=h[ \Phi(f(A))^{1/2}\Psi(g(B))\Phi(f(A))^{1/2}]$;
\item $F_2(A,B)=h[\Phi(f(A))\sigma \Psi(g(B))]$;
\item $F_3(A)=h[\Phi(f(A))]$.
\end{enumerate}
Here, the variables $A$ and $B$ are positive operators in $\mathcal{B}(\mathscr{H})$, $\Phi$ and $\Psi$ are positive linear maps between operator algebras, and $f,g,h$ are real valued continuous functions. Furthermore, $\sigma$ in (ii) is an operator mean in the Kubo-Ando sense \cite{Ku-An}.  We are mostly interested in the properties of the real valued functions $f,g,h$ for which the operator mapping $(A,B)\mapsto F(A,B)$ is jointly    convex (concave), and for which the matrix trace function $(A,B) \mapsto \Tr [F(A,B)]$ is jointly convex (concave). \par
 For instance, if $f(x)=x^p, g(x)=x^{1-p}, h(x)=x$, $\Phi(A)=K^*AK$, and $\Psi={\rm id}$, then the matrix function (i) under the trace implies the Lieb's theorem. If $f(x)=x^p, \Phi(A)=B^*AB$, and $h(x)=x^{q/p}$, then the matrix function (iii) under the trace implies Carlen and Lieb's result. If $f(x)=g(x)=x^p, h(x)=x^{1/p}$ and $\Phi = \Psi = {\rm id}$ and $\sigma$ is the arithmetic mean, then the matrix function (ii) under the trace implies the Minkowski type trace function. \par
 The paper is organized as follows: Section 2 is a preliminary.\par
 In Section 3, we treat the operator mapping  $F_2(A,B)$ and show its jointly  convexity$/$ concavity under suitable conditions on functions $f,g,h$. As applications, we prove jointly concavity$/$convexity of matrix trace functions $\Tr\left\{ F(A,B)\right\}$. Moreover, considering positive multi-linear mappings in $F(A,B)$, we study   the joint concavity$/$ convexity of $(A_1,\cdots,A_k)\mapsto h\left[ \Phi(f(A_1),\cdots,f(A_k))\right]$ as generalizations and complements to  results of Ando and Lieb  concerning  the concavity$/$ convexity of maps involving tensor product.

 In Section 4, we study the Minkowski type operator inequalities under a more general setting than Carlen and Lieb \cite{Crl-Lb}, Bekjan \cite{Bkj}, and Ando--Hiai \cite{AH}. As applications, we derive    Minkowski type matrix trace inequalities. Moreover, we give some estimations for the  operator determinants in the sense of Minkowski   operator  inequalities.


\section{Preliminaries}

 Throughout the paper, let $\mathcal{B}(\mathscr{H})$ denote the $C^*$-algebra of all bounded linear operators on a complex Hilbert space $(\mathscr{H},\<\cdot,\cdot\>)$ and let $I$ stand for the identity operator. We consider the usual L\"{o}wner partial order $\leq $ on the real space of self-adjoint operators. An operator $A$ in $\mathcal{B}(\mathscr{H})$ is said to be positive (denoted by $A\geq 0$) if $\< Ax,x\>\geq 0$ for all $x\in \mathscr{H}$. In particular, $A>0$ means that $A$ is positive and invertible.
  This turns the real subspace of self-adjoint operators into a partial ordered set, say $A\geq B$ if and only if $A-B$ is a positive operator. This is known as the L\"{o}wner partial order.  We denote by $\mathcal{B}(\mathscr{H})^{+}$ the set of all positive operators in $\mathcal{B}(\mathscr{H})$, and $\mathcal{B}(\mathscr{H})^{++}$ the set of all invertible $A\in \mathcal{B}(\mathscr{H})^{+}$.
A mapping $\Phi:\mathcal{B}(\mathscr{H})\to\mathcal{B}(\mathscr{K})$ is called positive if $\Phi(\mathcal{B}(\mathscr{H})^{+})\subseteq\mathcal{B}(\mathscr{K})^{+}$ and is called strictly positive if $\Phi(\mathcal{B}(\mathscr{H})^{++})\subseteq\mathcal{B}(\mathscr{K})^{++}$. It is called unital if $\Phi(I)=I$. 

\par

For a continuous real valued function $f:J\to\mathbb{R}$ and a  self-adjoint operator $A$ with spectrum contained in $J$, the self-adjoint operator $f(A)$ is defined by the continuous functional calculus. A continuous real valued function $f:J\to\mathbb{R}$ is said to be operator convex   if $f(\frac{A+B}{2})\leq \frac{f(A)+f(B)}{2}$ for all
 self-adjoint operators $A,B$ with spectra contained in $J$. If $-f$ is operator convex, then $f$ is called  operator concave.
 The function $f$ is said to be operator monotone (resp. decreasing) if $A\leq B$ implies $f(A)\leq f(B)$\, (resp. $f(A)\geq f(B)$) for  all self-adjoint operators $A,B$ with spectra contained in $J$.

  It is well-known that (see \cite{FMPS}) a continuous function $f:(0,\infty)\to (0,\infty)$ is operator concave if and only if $f$ is operator monotone. Typical examples of operator   convex functions are  $f(t)=t^p$, where  $p\in[-1,0]\cup[1,2]$.  In the case where $p\in[0,1]$,  the function $f(t)=t^p$  is operator concave and operator monotone.\par

An operator mean is a two-variable map $\sigma:\mathcal{B}(\mathscr{H})^{+}\times \mathcal{B}(\mathscr{H})^{+}\to\mathcal{B}(\mathscr{H})^{+}$ which satisfies the following properties:\\
\rm{(i)}\ joint monotonicity:   $A\leq C$ and $B\leq D$ implies $A\ \sigma \ B\leq C\ \sigma \ D$;\\
\rm{(ii)}\  upper continuity: if $A_n$ and $B_n$ are decreasing sequences of positive operators convergent to $A$ and $B$, respectively, in the strong operator topology, then $A_n\ \sigma \ B_n$ converges to $A\ \sigma \ B$. \\
\rm{(iii)}\  transformer inequality: $X^*(A\ \sigma \ B)X\leq (X^*AX)\ \sigma \ (X^*BX)$      for every $X$;\\
\rm{(iv)}\ normalization:  $I\ \sigma \ I=I$.

Known examples of operator means are operator arithmetic mean $A\nabla B=\frac{A+B}{2}$, operator geometric  mean $A\ \sharp \ B=A^{1/2}(A^{-1/2}BA^{-1/2})^{1/2}A^{1/2}$ and operator harmonic mean $A\ ! \ B=(A^{-1}\nabla B^{-1})^{-1}$ for $A,B\in \mathcal{B}(\mathscr{H})^{++}$.
The adjoint mean $\sigma^*$ of an operator mean $\sigma$ is defined by $A\ \sigma^* \ B=(A^{-1}\sigma B^{-1})^{-1}$.
By the Kubo-Ando theory \cite{Ku-An},   there exists a one-to-one correspondence   between operator means  and operator monotone functions $f:(0,\infty)\to (0,\infty)$ with $f(1)=1$, given by $A\ \sigma \ B=A^{1/2}f(A^{-1/2}BA^{-1/2})A^{1/2}$. The function $f$ is called the representing function of $\sigma$.  It is known that every operator mean $\sigma$ satisfies a monotonicity through every positive linear map $\Phi$, say
 \begin{align}\label{im}
\Phi(A\ \sigma \ B)\leq \Phi(A)\ \sigma \ \Phi(B)
 \end{align}
  for all $A,B\in\mathcal{B}(\mathscr{H})^{++}$.\par

 A continuous real valued function $f:(0,\infty)\to(0,\infty)$ is said to be operator log-convex (resp. operator log-concave) if
$f(A\nabla B)\leq f(A)\ \sharp \ f(B)$ (resp. $f(A\nabla B)\geq f(A)\ \sharp \ f(B)$) for all positive invertible operators $A,B$. It is known in \cite{AH} that $f$ is operator log-convex if and only if $f$ is operator monotone decreasing. In addition,  if $f$ is operator log-convex, then $f(A\nabla B)\leq f(A)\ \sigma \ f(B)$  for all positive invertible operators $A$ and $B$ and every symmetric operator mean $\sigma$.

Next, we recall the interpolation paths from \cite{FMPS}.  Let $A$ and $B$ be positive invertible operators in $\mathcal{B}(\mathscr{H})^{++}$. The interpolational paths are defined by
\[
A\ m_{r,t}\ B = A^{1/2}\left( (1-t)I+t(A^{-1/2}BA^{-1/2})^r\right)^{1/r}A^{1/2}
\]
for $r\in [-1,1]$ and $t\in [0,1]$. For each $t\in [0,1]$, $A\ m_{r,t}\ B$ is a path form $A!_t B$ to $A\nabla_t B$ via $A\s_t B$:
\begin{align*}
A\ m_{1,t}\ B & =A\ \nabla_t \ B = (1-t)A+tB;\\
A\ m_{0,t}\ B & = A\ \s_t\ B;\\
A\ m_{-1,t}\ B & = A\ !_t\ B = ((1-t)A^{-1}+tB^{-1})^{-1}.
\end{align*}
For each $t\in (0,1)$ the path $A\ m_{r,t}\ B$ is nondecreasing and norm continuous for $r\in {\Bbb R}$ and
\begin{align}\label{q-nq1}
A\ !_t\ B \leq A\ m_{r,t}\ B \leq A\ \nabla_t\ B
\end{align}
for $r\in [-1,1]$.\par
\medskip

\section{Jointly convex (concave) mappings related to the Lieb's functional}

 Every real function used in this section is assumed to be a non-negative real valued continuous function defined on $(0,\infty)$ (or more generally   on a subset of positive half line).
The next   lemma, which is the characterization of the operator log-convexity in terms of the interpolational paths,  is rather an straightforward corollary of \cite{AH}. For the reader's convenience, we give a short proof.

\begin{lemma} \label{th2-oplogconv}
Let $f$ be a nonnegative continuous  function on $(0,\infty)$. Then the following conditions are equivalent:
\begin{enumerate}
\item $f$ is operator monotone decreasing;
\item $f(A\ \nabla_t\ B)\leq f(A)\ m_{r,t}\ f(B)$ for all $A,B \in \mathcal{B}(\mathscr{H})^{++}$ and for all $r\in [-1,1]$ and $t\in [0,1]$;
\item $f$ is operator log-convex, i.e.,
\[
f(A\ \nabla_t\ B) \leq f(A)\ \s_t\ f(B) \ \mbox{for all $A,B \in \mathcal{B}(\mathscr{H})^{++}$ and for all $t\in [0,1]$;}
\]
\item $f(A\ \nabla_t \ B)\leq f(A)\ m_{r,t}\ f(B)$ for all $A,B \in \mathcal{B}(\mathscr{H})^{++}$ and for some $r\in [-1,1)$ and all $t\in [0,1]$.
\end{enumerate}
\end{lemma}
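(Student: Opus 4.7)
The plan is to close the four equivalences via the cycle (i) $\Rightarrow$ (ii) $\Rightarrow$ (iii) $\Rightarrow$ (iv) $\Rightarrow$ (i), with the Ando--Hiai characterization \cite{AH} of operator log-convex functions as operator monotone decreasing functions serving as the central external input.

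For (i) $\Rightarrow$ (ii), I will invoke the strengthening from \cite{AH}: an operator monotone decreasing $f$ satisfies the sharp harmonic mean bound $f(A \nabla_t B) \leq f(A)\,!_t\,f(B) = f(A)\,m_{-1,t}\,f(B)$. Applying the interpolational-path inequality \eqref{q-nq1} to the positive operators $f(A), f(B)$ then gives $f(A)\,m_{-1, t}\,f(B) \leq f(A)\,m_{r, t}\,f(B)$ for every $r \in [-1, 1]$, and chaining establishes (ii). The implications (ii) $\Rightarrow$ (iii) and (iii) $\Rightarrow$ (iv) are routine specializations to $r = 0$: since $m_{0, t} = \sharp_t$ and $0 \in [-1, 1)$, both reduce to direct substitutions.

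For the non-trivial direction (iv) $\Rightarrow$ (i), I will first restrict the hypothesis to $t = 1/2$. The mean $m_{r, 1/2}$ is then a \emph{symmetric} operator mean, with representing function $((1 + x^r)/2)^{1/r}$, and is distinct from $\nabla$ because $r < 1$. The midpoint inequality $f(\tfrac{A+B}{2}) \leq f(A)\,m_{r, 1/2}\,f(B)$ with any non-arithmetic symmetric operator mean can be shown (via the Ando--Hiai machinery) to force operator log-convexity; the equivalence of (iii) with (i) from \cite{AH} then produces operator monotone decreasingness. The general-$t$ form of log-convexity needed in (iii) follows from the $t = 1/2$ case by the standard dyadic iteration and continuity argument.

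The main obstacle is the last step. When the $r$ provided by the hypothesis lies in $(0, 1)$, the mean $m_{r, 1/2}$ sits strictly between $\sharp$ and $\nabla$, so the hypothesis of (iv) is \emph{a priori weaker} than operator log-convexity, and bare monotonicity comparisons do not close the implication. Closing the gap requires exploiting the precise symmetric structure of $m_{r, 1/2}$---for instance, iterating the midpoint inequality along binary refinements and passing to a limit that collapses to the geometric-mean bound, in the spirit of the arguments used in \cite{AH}. This is the delicate step where the proof relies most heavily on the Ando--Hiai circle of ideas.
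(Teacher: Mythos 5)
Your cycle (i)$\Rightarrow$(ii)$\Rightarrow$(iii)$\Rightarrow$(iv)$\Rightarrow$(i) is exactly the paper's, and the first three implications are handled the same way: the paper derives the harmonic-mean bound $f(A\,\nabla_t\,B)\le f(A)\,!_t\,f(B)$ from the operator concavity of $1/f$ (rather than quoting it from \cite{AH}, as you do), then upgrades to all $r\in[-1,1]$ via \eqref{q-nq1}; the specializations to $r=0$ are identical. The one place where you diverge is (iv)$\Rightarrow$(i), and there the ``main obstacle'' you describe is not actually an obstacle: \cite[Theorem 2.1]{AH} is stated for an \emph{arbitrary} symmetric operator mean, and among its equivalent conditions is precisely ``$f(A\,\nabla\,B)\le f(A)\,\sigma\,f(B)$ for some symmetric operator mean $\sigma\neq\nabla$,'' which already implies that $f$ is operator monotone decreasing. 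Since $m_{r,1/2}$ is a symmetric operator mean and $m_{r,1/2}\neq\nabla$ for $r\in[-1,1)$ (this is why (iv) excludes $r=1$), the implication is a one-line citation, exactly as in the paper; you do not need to first deflate $m_{r,1/2}$ down to $\sharp$, and the dyadic-refinement limiting argument you sketch for that purpose is both unnecessary and, as written, not obviously workable (iterating in $t$ does not shrink the mean toward $\sharp$ when $r\in(0,1)$). Once (i) is recovered from the cited theorem, the general-$t$ statements follow from (i)$\Rightarrow$(ii)$\Rightarrow$(iii), so no separate continuity argument is needed either. With that correction your argument coincides with the paper's proof.
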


\begin{proof}
(i)$\Longrightarrow $(ii): We may assume that $f(x)>0$ for all $x\in (0,\infty)$. Since $1/f$ is positive and operator monotone on $(0,\infty)$, it follows that $1/f$ is operator concave on $(0,\infty)$. Hence
\[
f(A\ \nabla_t\ B)^{-1}\geq f(A)^{-1}\ \nabla_t\ f(B)^{-1}\quad \mbox{for all $t\in [0,1]$}
\]
so that
\[
f(A\ \nabla_t\ B)\leq f(A)\ !_t\ f(B) \quad \mbox{for all $t\in [0,1]$}
\]
which implies (ii) by noting \eqref{q-nq1}. \\
(ii)$\Longrightarrow $(iii):  If we put $r=0$ in (ii), then we have (iii).\\
(iii)$\Longrightarrow $(iv): It follows from $m_{0,t}=\s_t$.\\
(iv)$\Longrightarrow $(i): Put $t=1/2$ in (iv) so that $A\ m_{r,1/2}\ B\neq A\nabla B$ is a symmetric operator mean and
\[
f(A\ \nabla\ B) \leq f(A)\ m_{r,1/2}\ f(B).
\]
Hence $f$ is operator monotone decreasing by using \cite[Theorem 2.1]{AH}.
\end{proof}
Next  is a counterpart to Lemma~\ref{th2-oplogconv} for operator log-concave functions.
\begin{lemma}
Let $f$ be a nonnegative continuous  function on $(0,\infty)$. Then the following conditions are equivalent:
\begin{enumerate}
\item $f$ is operator monotone;
\item $f(A\ \nabla_t\ B)\geq f(A)\ m_{r,t}\ f(B)$ for all $A,B \in \mathcal{B}(\mathscr{H})^{++}$ and for all $r\in [-1,1]$ and all $t\in [0,1]$;
\item $f$ is operator log-concave, i.e.,
\[
f(A\ \nabla_t\ B) \geq f(A)\ \s_t\ f(B)\ \ \mbox{for all $A,B \in \mathcal{B}(\mathscr{H})^{++}$ and for all $t\in [0,1]$;}
\]
\item $f(A\ \nabla_t \ B)\geq f(A)\ m_{r,t}\ f(B)$ for all $A,B \in \mathcal{B}(\mathscr{H})^{++}$ and for some $r\in (-1,1]$ and all $t\in [0,1]$.
\end{enumerate}
\end{lemma}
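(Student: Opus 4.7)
The strategy is to follow the same cyclic pattern (i)$\Rightarrow$(ii)$\Rightarrow$(iii)$\Rightarrow$(iv)$\Rightarrow$(i) used for Lemma~\ref{th2-oplogconv}, simply reversing each inequality, since the present situation is dual to the log-convex one.

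For (i)$\Rightarrow$(ii), I would invoke the classical fact recalled in the preliminaries that a nonnegative continuous operator monotone function on $(0,\infty)$ is automatically operator concave. This gives the midpoint (or weighted) inequality $f(A\,\nabla_t\,B)\geq f(A)\,\nabla_t\,f(B)$. Combining this with the upper bound $A\,m_{r,t}\,B\leq A\,\nabla_t\,B$ from \eqref{q-nq1}, applied at the pair $f(A),f(B)$, delivers (ii). The implication (ii)$\Rightarrow$(iii) then follows by setting $r=0$ and using the identification $m_{0,t}=\sharp_t$, and (iii)$\Rightarrow$(iv) is immediate since $0\in(-1,1]$.

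For the closing loop (iv)$\Rightarrow$(i), I would set $t=1/2$. The assumption $r\in(-1,1]$ guarantees that $A\,m_{r,1/2}\,B$ is a symmetric operator mean distinct from the harmonic mean $!$, so that the inequality $f(A\,\nabla\,B)\geq f(A)\,m_{r,1/2}\,f(B)$ forces operator monotonicity of $f$ by \cite[Theorem~2.1]{AH}, closing the cycle.

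The only delicate point is ensuring the correct endpoint is excluded in (iv). In the log-convex case one excludes $r=1$ so that the resulting symmetric mean is not $\nabla$, because $f(A\,\nabla\,B)\leq f(A)\,\nabla\,f(B)$ is just operator convexity, which is strictly weaker than operator monotone decreasingness. Here, dually, one must exclude $r=-1$ because $f(A\,\nabla\,B)\geq f(A)\,!\,f(B)$ merely expresses operator concavity, which is strictly weaker than operator monotonicity. Beyond this bookkeeping, the argument is a routine reversal of the proof of Lemma~\ref{th2-oplogconv} and I do not expect any substantive obstacle.
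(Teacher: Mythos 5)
Your proposal follows the paper's proof almost exactly: the chain (i)$\Rightarrow$(ii)$\Rightarrow$(iii)$\Rightarrow$(iv) is handled the same way (operator monotonicity gives operator concavity, hence $f(A\,\nabla_t\,B)\geq f(A)\,\nabla_t\,f(B)\geq f(A)\,m_{r,t}\,f(B)$ by \eqref{q-nq1}), and you correctly identify $r=-1$ as the endpoint that must be excluded in (iv). The one place where you cut a corner is (iv)$\Rightarrow$(i): you cite \cite[Theorem~2.1]{AH} directly for the statement ``$f(A\nabla B)\geq f(A)\,\sigma\,f(B)$ for a symmetric mean $\sigma\neq\,!$ forces $f$ operator monotone,'' but that theorem, as used in this paper, characterizes operator \emph{monotone decreasing} functions via the reversed inequality with $\sigma\neq\nabla$. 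The paper closes the loop by dualizing: it inverts the hypothesis to get $f(A\nabla B)^{-1}\leq f(A)^{-1}\,m^{*}_{r,1/2}\,f(B)^{-1}$, notes that the adjoint mean satisfies $m^{*}_{r,1/2}=m_{-r,1/2}$ with $-r\neq 1$ (so it is symmetric and not $\nabla$), and then applies Lemma~\ref{th2-oplogconv} to conclude that $1/f$ is operator monotone decreasing, i.e.\ $f$ is operator monotone. This is a two-line fix entirely in the spirit of your ``routine reversal,'' so I would not call it a substantive gap, but as written your citation does not quite support the step. A minor further quibble: the inequality $f(A\nabla B)\geq f(A)\,!\,f(B)$ at $r=-1$ is equivalent to operator convexity of $1/f$, not to operator concavity of $f$; either way it is too weak to imply operator monotonicity, which is the point that matters.
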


\begin{proof} The implications $\mathrm{(i)}\Longrightarrow \mathrm{(ii)} \Longrightarrow \mathrm{(iii)} \Longrightarrow \mathrm{(iv)}$ holds obviously by noting the fact that the operator monotonicity of $f$ implies its operator concavity. \par
(iv)$\Longrightarrow $(i): If $t=1/2$, then $m_{r,1/2}$ is a symmetric mean for every $r\in (-1,1]$. Now if (iv)  holds, then there exists $r\in (-1,1]$ such that
\[
 f(A\ \nabla \ B)^{-1}\leq \left(f(A)\ m_{r,1/2}\ f(B)\right)^{-1} = f(A)^{-1}\ m^*_{r,1/2}\ f(B)^{-1}
\]
 Since $m^*_{r,1/2}=m_{-r,1/2}$ is symmetric and $-r\not= 1$, it follows from Lemma~\ref{th2-oplogconv} that $1/f$ is operator monotone decreasing and we have (i).
\end{proof}


\begin{lemma} \label{thm-2}
If $h$ is a nonnegative operator monotone function on $(0,\infty)$, then
\[
h(A\ !_t\ B) \leq h(A)\ !_t\ h(B)
\]
for all $A,B \in \mathcal{B}(\mathscr{H})^{+}$ and for all $t\in [0,1]$.
\end{lemma}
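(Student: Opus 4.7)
My strategy is to \emph{dualize} the inequality: both sides involve the harmonic mean $!_t$, and I would convert the statement into an operator-concavity statement (which involves $\nabla_t$) for the companion function $g(t):=1/h(1/t)$, then read the desired inequality off by two successive inversions.

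First I would reduce to the case where $A,B$ are invertible and $h>0$ on $(0,\infty)$. For non-invertible $A,B\in \mathcal{B}(\mathscr{H})^+$, the standard device is to replace them by $A+\varepsilon I$, $B+\varepsilon I$ and let $\varepsilon\downarrow 0$; the upper continuity of $!_t$ together with the continuity of $h$ guarantees that the inequality passes to the limit. Analogously, a nonzero nonnegative operator monotone function on $(0,\infty)$ is automatically strictly positive (a zero of such an analytic function would force $h\equiv 0$), so $h>0$ is harmless.

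Once in the invertible setting, I would show that $g(t):=1/h(1/t)$ is operator monotone on $(0,\infty)$. For positive invertible $X\leq Y$, inversion reverses the order ($Y^{-1}\leq X^{-1}$); applying the operator monotone $h$ gives $h(Y^{-1})\leq h(X^{-1})$; and inverting once more, using $h>0$, yields $g(X)=h(X^{-1})^{-1}\leq h(Y^{-1})^{-1}=g(Y)$. By the characterization recalled in the preliminaries that a nonnegative continuous operator monotone function on $(0,\infty)$ is operator concave, the function $g$ is operator concave.

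Now the rest is bookkeeping. Applying operator concavity of $g$ at $X=A^{-1}$ and $Y=B^{-1}$ and using the functional-calculus identity $g(Z)=h(Z^{-1})^{-1}$ for positive invertible $Z$, I would obtain
\[
h(A\,!_t\,B)^{-1} \;=\; g\bigl((1-t)A^{-1}+tB^{-1}\bigr) \;\geq\; (1-t)h(A)^{-1}+t\,h(B)^{-1},
\]
and taking inverses (which reverses the inequality between positive invertibles) produces
\[
h(A\,!_t\,B) \;\leq\; \bigl((1-t)h(A)^{-1}+t\,h(B)^{-1}\bigr)^{-1} \;=\; h(A)\,!_t\,h(B).
\]
The main subtlety is the operator monotonicity of the dual function $g$; the limiting argument for non-invertible operators is routine but should be stated carefully.
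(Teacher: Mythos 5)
Your proof is correct and is essentially the paper's argument: the paper routes the same computation through its Lemma~3.1(ii) applied to the operator monotone decreasing function $t\mapsto h(1/t)$, whose proof is precisely the operator concavity of $t\mapsto 1/h(1/t)$ followed by the two inversions you carry out explicitly. Your handling of the non-invertible case and of the strict positivity of $h$ is in fact slightly more careful than the paper's one-line reduction.
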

\begin{proof}
By continuity of the harmonic mean $!_t$, we may assume that $A$ and $B$ are positive invertible operators. Since $h$ is operator monotone,     $t\mapsto h(1/t)$ is operator monotone decreasing and so operator log-convex. Hence (ii) of Lemma~\ref{th2-oplogconv} yields
\[
h((A\ \nabla_t\ B)^{-1}) \leq h(A^{-1})\ !_t\ h(B^{-1})
\]
and so we have
\[
h(A!_t B) = h((A^{-1}\ \nabla_t\ B^{-1})^{-1}) \leq h(A)\ !_t\ h(B) \qquad \mbox{for all $t\in [0,1].$}
\]
\end{proof}
\par

 A two variable mapping  $F:\mathcal{B}(\mathscr{H})^{++} \times \mathcal{B}(\mathscr{H})^{++}\to\mathcal{B}(\mathscr{H})^{++}$  is called  jointly  convex  in $(A,B)$ if
\[
F((1-\l)A_1+\l A_2, (1-\l)B_1+\l B_2) \leq (1-\l)F(A_1,B_1)+\l F(A_2,B_2)
\]
for all  $A_i,B_i$ in $\mathcal{B}(\mathscr{H})^{++}$ for $i=1,2$ and for all $\l \in [0,1]$. The mapping  $F$ is called jointly   concave if     $-F$ is jointly   convex in $(A,B)$.  The mapping $F$ is called jointly   log-convex  (resp. jointly   log-concave) if
\[
F(A_1\nabla A_2, B_1\nabla B_2) \leq F(A_1,B_1)\ \s \ F(A_2,B_2)
\]
(resp.
\[
F(A_1\nabla A_2, B_1\nabla B_2) \geq F(A_1,B_1)\ \s \ F(A_2,B_2)
\]
) \\
for all  operators $A_i,B_i \in \mathcal{B}(\mathscr{H})^{++}$ for $i=1,2$.\par

In the next theorem we study jointly convexity$/$concavity of the operator mapping
\begin{align}\label{mee}
  F_2(A,B) = h(\Phi(f(A))\ \sigma \ \Psi(g(B)))
  \end{align}
in which $\Phi,\Psi:\mathcal{B}(\mathscr{H}) \to\mathcal{B}(\mathscr{K})$ are positive  linear maps and $\sigma$ is an operator mean.

\begin{theorem}\label{th3-mu}
  Let $\Phi,\Psi:\mathcal{B}(\mathscr{H}) \to\mathcal{B}(\mathscr{K})$ be positive  linear maps and let $\sigma$ be an operator mean. \\
  {\rm (i)}\ If $f,g$ are operator log-convex functions and $h$ is an operator monotone function,  then  \eqref{mee} is jointly   log-convex;\\
      {\rm (ii)}\ If $f,g$ are operator log-concave functions and $h$ is an operator monotone function, then  \eqref{mee} is jointly   log-concave.
\end{theorem}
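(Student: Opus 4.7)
My plan for (i) is to bound the argument of $h$ from above using the tightest log-convexity inequality for $f,g$ (via the harmonic mean), swap the inner $!$ past $\sigma$, and then distribute $h$ on the outside using Lemma~\ref{thm-2}. Specifically, I first apply Lemma~\ref{th2-oplogconv} at $r=-1$ (so that $m_{-1,1/2} = \,!$) to get the tight bounds $f(A_1 \nabla A_2) \leq f(A_1) \,!\, f(A_2)$ and $g(B_1 \nabla B_2) \leq g(B_1) \,!\, g(B_2)$, push them through $\Phi,\Psi$ via \eqref{im} applied to the harmonic mean, and then combine via joint monotonicity of $\sigma$ to arrive at
\[
\Phi(f(A_1 \nabla A_2))\ \sigma\ \Psi(g(B_1 \nabla B_2)) \leq (\Phi(f(A_1)) \,!\, \Phi(f(A_2)))\ \sigma\ (\Psi(g(B_1))\,!\,\Psi(g(B_2))).
\]

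The heart of the argument, and the step I expect to be the main obstacle, is the following ``swap'' inequality, valid for every Kubo--Ando operator mean $\sigma$ and positive invertible $P_i, Q_i$:
\[
(P_1\,!\,P_2)\ \sigma\ (Q_1\,!\,Q_2) \leq (P_1\ \sigma\ Q_1)\,!\,(P_2\ \sigma\ Q_2).
\]
My approach to this is to invert both sides and use $(A\sigma B)^{-1}=A^{-1}\sigma^{*}B^{-1}$ together with $!^{*}=\nabla$; writing $U_i=P_i^{-1}$ and $V_i=Q_i^{-1}$, the target becomes exactly the joint concavity of the adjoint mean,
\[
(U_1 \nabla U_2)\ \sigma^{*}\ (V_1 \nabla V_2) \geq (U_1\ \sigma^{*}\ V_1) \nabla (U_2\ \sigma^{*}\ V_2),
\]
which is the standard Kubo--Ando joint concavity property enjoyed by any operator mean.

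With the swap established, I apply the operator monotone $h$ to preserve the chain, then invoke Lemma~\ref{thm-2} to pull $h$ through the outer $!$ (i.e.\ $h(A\,!\,B)\leq h(A)\,!\,h(B)$), and finally upgrade $!$ to $\sharp$ on the outside using \eqref{q-nq1}. Composing gives $F_2(A_1 \nabla A_2, B_1 \nabla B_2) \leq F_2(A_1, B_1)\,\sharp\,F_2(A_2, B_2)$, which is the desired joint log-convexity.

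Part (ii) is the dual and easier case. Operator log-concavity of $f,g$ is equivalent to their operator monotonicity on $(0,\infty)$, which implies operator concavity, so $f(A_1 \nabla A_2) \geq f(A_1) \nabla f(A_2)$ and similarly for $g$. Linearity of $\Phi,\Psi$ commutes with $\nabla$ and monotonicity of $\sigma$ passes the inequality outward; the direct joint concavity of $\sigma$ (now no adjoint-mean trick is needed) then gives
\[
\Phi(f(A_1 \nabla A_2))\ \sigma\ \Psi(g(B_1 \nabla B_2)) \geq (\Phi(f(A_1))\ \sigma\ \Psi(g(B_1))) \nabla (\Phi(f(A_2))\ \sigma\ \Psi(g(B_2))).
\]
Applying the operator concave $h$ and the elementary bound $h(X)\nabla h(Y) \geq h(X)\,\sharp\,h(Y)$ closes the chain, yielding joint log-concavity of $F_2$.
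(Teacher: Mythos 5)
Your proposal is correct and follows essentially the same route as the paper: the harmonic-mean form of operator log-convexity pushed through $\Phi,\Psi$ via \eqref{im}, the swap inequality $(P_1\,!\,P_2)\,\sigma\,(Q_1\,!\,Q_2)\leq (P_1\sigma Q_1)\,!\,(P_2\sigma Q_2)$ proved by inverting and invoking joint concavity of the adjoint mean, and Lemma~\ref{thm-2} to distribute $h$ over the outer harmonic mean, with part (ii) handled by concavity throughout. The only cosmetic difference is that you explicitly upgrade the final $!$ to $\sharp$ via \eqref{q-nq1}, whereas the paper simply records the stronger harmonic-mean bound.
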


\begin{proof}
First note that it follows from \cite[Theorem 4.8]{Ku-An} that
\begin{equation} \label{eq:hs}
[X!Y]\ \sigma \ [Z!W] \leq [X\sigma Z]\ ! \ [Y \sigma W]
\end{equation}
for all positive operators $X,Y,Z$ and $W \in \mathcal{B}(\mathscr{H})$. Indeed, since the adjoint operator mean $\sigma^*$ of $\sigma$ is jointly operator concave, we have
  \begin{align*}
   [X!Y]\ \sigma \ [Z!W]&=\left(X^{-1}\nabla Y^{-1}\right)^{-1}\sigma\left(Z^{-1}\nabla W^{-1}\right)^{-1}\nonumber\\
   &=\left[\left(X^{-1}\nabla Y^{-1}\right)\sigma^*\left(Z^{-1}\nabla W^{-1}\right)\right]^{-1}\\
   &\leq \left[ \left(X^{-1}\sigma^*Z^{-1}\right)\nabla \left(Y^{-1}\sigma^* W^{-1}\right)\right]^{-1}\\
   &=\left[ \left(X\sigma Z\right)^{-1}\nabla \left(Y\sigma  W\right)^{-1}\right]^{-1}=\left(X\sigma Z\right)\ ! \ \left(Y\sigma  W\right)\nonumber
  \end{align*}
  for all positive operators $X,Y,Z,W$. If $h$ is operator monotone, then the function  $t\mapsto h(1/t)$ is operator monotone decreasing and so is operator log-convex. Accordingly,  Lemma~\ref{thm-2} gives $h(M\ ! \ N)\leq h(M)\ ! \ h(N)$ for all positive operators $M,N \in \mathcal{B}(\mathscr{H})$. Therefore it follows from \eqref{eq:hs} that
  \begin{align}\label{qy1}
   h([X!Y]\ \sigma \ [Z!W])\leq h\left(\left(X\sigma Z\right)\ ! \ \left(Y\sigma  W\right)\right)\leq h\left(X\sigma Z\right)\ ! \ h\left(Y\sigma  W\right).
  \end{align}
   Now suppose that $A_1,A_2,B_1,B_2$ are positive invertible operators in $\mathcal{B}(\mathscr{H})^{++}$ and  $f$ and $g$ are operator log-convex functions.  Applying \eqref{im} and the operator  log-convexity of $f$ and $g$ we have
    \begin{align*}
     \Phi(f(A_1\nabla A_2)) \leq   \Phi(f(A_1))\  !  \ \Phi(f(A_2))\quad \mbox{and} \quad \Psi(g(B_1\nabla B_2))\leq \Psi(g(B_1))\ ! \ \Psi(g(B_2)).
  \end{align*}
Since every operator mean $\sigma$ is monotone in both variable, it gives
{\small  \begin{align}\label{qr1}
     \Phi(f(A_1\nabla A_2))\ \sigma \ \Psi(g(B_1\nabla B_2))\leq [\Phi(f(A_1))!\Phi(f(A_2))]\ \sigma \ [\Psi(g(B_1))!\Psi(g(B_2))].
  \end{align}}
 Since $h$ is operator monotone,  we conclude form \eqref{qy1} and  \eqref{qr1} that
 {\small  \begin{align}\label{qy2}
h(\Phi(f(A_1\nabla A_2))\ \sigma \ \Psi(g(B_1\nabla B_2)))
\leq h\left(\Phi(f(A_1)\ \sigma \ \Psi(g(B_1))\right)\ ! \ h\left(\Phi(f(A_2))\sigma  \Psi(g(B_2))\right)
\end{align}}
and this proves (i).  To prove (ii),  assume that   $f$ and $g$ are operator (log-)concave so that
\[
\Phi(f(A_1\nabla A_2)) \geq \Phi(f(A_1))\nabla \Phi(f(A_2))\quad \mbox{and} \quad \Psi(g(B_1\nabla B_2))\geq \Psi(g(B_1))\nabla \Psi(g(B_2)).
\]
The joint  monotonicity and concavity of the operator mean $\sigma$  ensure  that
\begin{align*}
\Phi(f(A_1\nabla A_2)) \sigma \Psi(g(B_1\nabla B_2)) & \geq [\Phi(f(A_1))\nabla \Phi(f(A_2))] \sigma [\Psi(g(B_1))\nabla \Psi(g(B_2))] \\
& \geq [\Phi(f(A_1))\sigma \Psi(g(B_1))] \nabla [\Phi(f(A_2))\sigma \Psi(g(B_2))].
\end{align*}
Since $h$ is operator monotone and operator concave, we have
\begin{align*}
h[\Phi(f(A_1\nabla A_2)) \sigma \Psi(g(B_1\nabla B_2))] & \geq
h[\Phi(f(A_1))\sigma \Psi(g(B_1))] \nabla [\Phi(f(A_2))\sigma \Psi(g(B_2))] \\
& \geq h[\Phi(f(A_1))\sigma \Psi(g(B_1))] \nabla h[\Phi(f(A_2))\sigma \Psi(g(B_2))]
\end{align*}
as desired.
\end{proof}
 Theorem~\ref{th3-mu} provides the following  slight improvement of \cite[Theorem 3.1]{KY}.
\begin{corollary} \label{cor-3-1}
Let $\Phi:\mathcal{B}(\mathscr{H}) \to\mathcal{B}(\mathscr{K})$ be a positive  linear map.  If $f:(0,\infty)\to(0,\infty)$ is an operator log-convex function and $h:(0,\infty)\to\mathbb{R}$ is an operator monotone function, then $F(A)= h(\Phi(f(A))$ is  log-convex.
\end{corollary}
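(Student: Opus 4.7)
The plan is to reuse the three-step argument of Theorem~\ref{th3-mu}(i), but with the $B$-variable switched off. Equivalently, one could formally deduce Corollary~\ref{cor-3-1} from Theorem~\ref{th3-mu}(i) by specialising $\sigma$ to the degenerate Kubo--Ando mean $A\,\sigma\,B=A$ (representing function constantly equal to $1$) and freezing the $B$-variable, but the direct version is shorter and avoids invoking a degenerate mean.

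First, since $f$ is operator log-convex, Lemma~\ref{th2-oplogconv} (condition (ii) at $r=-1$, $t=1/2$) yields
\[
f(A_1\ \nabla\ A_2)\ \leq\ f(A_1)\ !\ f(A_2)\qquad\text{for all } A_1,A_2\in\mathcal{B}(\mathscr{H})^{++}.
\]
Applying the positive linear map $\Phi$ and the Kubo--Ando monotonicity \eqref{im} with $\sigma = !$ gives
\[
\Phi(f(A_1\ \nabla\ A_2))\ \leq\ \Phi\bigl(f(A_1)\ !\ f(A_2)\bigr)\ \leq\ \Phi(f(A_1))\ !\ \Phi(f(A_2)).
\]
Now $h$ is operator monotone, so it preserves this inequality, and Lemma~\ref{thm-2} transports $h$ through the harmonic mean:
\[
h(\Phi(f(A_1\ \nabla\ A_2)))\ \leq\ h\bigl(\Phi(f(A_1))\ !\ \Phi(f(A_2))\bigr)\ \leq\ h(\Phi(f(A_1)))\ !\ h(\Phi(f(A_2))).
\]
Finally, the operator AM--GM--HM inequality $X\ !\ Y\leq X\ \s\ Y$ closes the chain to
\[
F(A_1\ \nabla\ A_2)\ \leq\ F(A_1)\ \s\ F(A_2),
\]
which is the desired log-convexity.

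There is no real obstacle; all four steps are direct citations of results already established in the excerpt. The only minor point to verify is that $F(A)=h(\Phi(f(A)))$ takes values in the positive cone so that the geometric mean $\s$ is meaningful, but this is implicit in the hypothesis (the range of $h$ and the positivity assumed throughout Theorem~\ref{th3-mu}).
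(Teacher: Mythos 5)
Your proof is correct and follows essentially the paper's route: the paper obtains this corollary by specializing Theorem~\ref{th3-mu}(i) (whose proof is exactly your four-step chain --- log-convexity of $f$ gives the harmonic-mean bound, \eqref{im} pushes it through $\Phi$, Lemma~\ref{thm-2} transports $h$ through $!$, and $!\leq\sharp$ finishes), and your direct version even records the stronger intermediate bound $F(A_1\nabla A_2)\leq F(A_1)\ !\ F(A_2)$ before relaxing to $\sharp$. The one implicit point you flag --- that $h$ must be nonnegative for Lemma~\ref{thm-2} and for the geometric mean to be meaningful --- is equally implicit in the paper's own statement, so it is not a gap relative to the source.
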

It has been shown in \cite[Theorem 4.2]{KY} that if  $f,g$ are operator monotone decreasing functions and $h$ is operator monotone, then the functional $(A,B)\mapsto \mathrm{Tr}\left[F_1(A,B)\right]$ is separatly convex. The following corollary gives the separate convexity of the operator mapping $(A,B)\mapsto  F_1(A,B)$ without the presence of the trace functional.
\begin{corollary}
Let $\Phi, \Psi:\mathcal{B}(\mathscr{H}) \to\mathcal{B}(\mathscr{K})$ be positive  linear maps.  If $g:(0,\infty)\to(0,\infty)$ is an operator log-convex function and $h:(0,\infty)\to\mathbb{R}$ is an operator monotone function, then for a fixed positive operator  $A$
\[
F_1(A,B)= h[ \Phi(f(A))^{1/2}\Psi(g(B))\Phi(f(A))^{1/2}]
\]
is operator log-convex in the second term.
\end{corollary}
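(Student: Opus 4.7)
The strategy is to reduce the statement to the preceding Corollary~\ref{cor-3-1} by absorbing the congruence by $\Phi(f(A))^{1/2}$ into a new positive linear map. With $A$ fixed, set $X := \Phi(f(A))^{1/2}$, a fixed positive operator in $\mathcal{B}(\mathscr{K})$, and define $\widetilde{\Psi} : \mathcal{B}(\mathscr{H}) \to \mathcal{B}(\mathscr{K})$ by $\widetilde{\Psi}(Y) := X\,\Psi(Y)\,X$. The map $\widetilde{\Psi}$ is linear because $\Psi$ is, and is positive because $X = X^{*} \geq 0$: whenever $Y \geq 0$ one has $\Psi(Y) \geq 0$ and hence $X\,\Psi(Y)\,X \geq 0$. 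Thus $\widetilde{\Psi}$ is a bona fide positive linear map between operator algebras.

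With this notation, the mapping under consideration becomes
\[
B \mapsto F_1(A,B) = h\!\left[\Phi(f(A))^{1/2}\,\Psi(g(B))\,\Phi(f(A))^{1/2}\right] = h\!\left(\widetilde{\Psi}(g(B))\right).
\]
Since $g$ is operator log-convex and $h$ is operator monotone, Corollary~\ref{cor-3-1} applied to the positive linear map $\widetilde{\Psi}$, the operator log-convex function $g$, and the operator monotone function $h$ yields directly that $B \mapsto h(\widetilde{\Psi}(g(B)))$ is operator log-convex, which is the claim.

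Since the argument is just a reduction to Corollary~\ref{cor-3-1}, there is no genuine obstacle. The only point requiring attention is the verification that $\widetilde{\Psi}$ inherits both properties needed as hypotheses of Corollary~\ref{cor-3-1}, namely linearity and positivity, and both are immediate from the positivity of $X$ together with the positive linearity of $\Psi$. If one preferred not to invoke Corollary~\ref{cor-3-1} as a black box, one could instead unfold its proof in the present setting: use Lemma~\ref{th2-oplogconv} to obtain $g(B_1 \nabla B_2) \leq g(B_1)\,!\,g(B_2)$, apply the monotonicity of $\Psi$ together with \eqref{im}, then use the transformer inequality $X(M!N)X \leq (XMX)!(XNX)$, and finally invoke Lemma~\ref{thm-2} together with $!\leq \sharp$; however, packaging the sandwich into $\widetilde{\Psi}$ makes the proof a one-line corollary and is therefore preferable.
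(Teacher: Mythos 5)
Your proposal is correct and is exactly the paper's argument: the paper likewise defines $\Gamma(X)=\Phi(f(A))^{1/2}\Psi(X)\Phi(f(A))^{1/2}$, observes it is a positive linear map, and invokes Corollary~\ref{cor-3-1}. Your additional verification of linearity and positivity of the sandwiched map is a harmless elaboration of the same one-line reduction.
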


\begin{proof}
Put $\Gamma(X)=\Phi(f(A))^{1/2}\Psi(X)\Phi(f(A))^{1/2}$ and then $\Gamma$ is a positive linear map. The assertion then follows from Corollary~\ref{cor-3-1}.
\end{proof}

 It should be remarked that parts (i) and (ii) of Theorem \ref{th3-mu} do not remain valid if we replace operator log-convex functions $f$ and $g$ by operator convex functions. If fact, if $f$ and $g$ are operator convex and $h$ is operator monotone, the mapping \eqref{mee} does not even need to be convex. To see this, assume that $f(t)=g(t)=t^2$ and $h(t)=\sqrt{t}$. Consider $\Phi(A)=\Psi(A)=A$ and let $\sigma=\sharp$ be the operator geometric mean. If
 \begin{align*}
  A_1=\left[\begin{array}{cc}
     2 &1 \\ 1 &2   \end{array}\right],\quad  A_2=\left[\begin{array}{cc}
     1 &0 \\0 &2   \end{array}\right],\quad  B_1=\left[\begin{array}{cc}
     4 &-2 \\ -2 &3   \end{array}\right],\quad  B_2=\left[\begin{array}{cc}
     1 &-1 \\ -1 &3   \end{array}\right],
  \end{align*}
 Then
{\small \begin{align*}
   \left(\left(\frac{A_1+A_2}{2}\right)^2\sharp\left(\frac{B_1+B_2}{2}\right)^2\right)^{\frac{1}{2}}
   &=\left[\begin{array}{cc}
  1.7915   &  -0.3082 \\  -0.3082  &  2.1739 \end{array}\right]\\
&\nleqslant \left[\begin{array}{cc}
  1.6622 &  -0.3026 \\    -0.3026   & 2.1916 \end{array}\right]
 =  \frac{1}{2}\left(\left(A_1^2\sharp B_1^2\right)^{\frac{1}{2}}+\left(A_2^2\sharp B_2^2\right)^{\frac{1}{2}}\right).
 \end{align*}}
\par
 In the remainder of this section, we will pay attention to the case of  matrices. We denote by ${\Bbb M}_n={\Bbb M}_n(\mathbb{C})$ the algebra of all $n\times n$ matrices with complex entries.  We write ${\Bbb M}_n^+:=\{ A\in {\Bbb M}_n : A\geq 0\}$, the $n\times n$ positive semidefinite matrices, and ${\Bbb P}_n:=\{ A\in {\Bbb M}_n:A>0\}$, the $n\times n$ positive definite matrices. The usual trace on ${\Bbb M}_n$ is denoted by $\Tr$ and we will use the terms operator and matrix interchangeably. Every operator convex (operator monotone) function is a real convex (monotone increasing) function but the converse is not valid. However, it is known that \cite{H-P2} if $f$ is convex, then $\mathrm{Tr} f(\frac{A+B}{2})\leq \mathrm{Tr}\frac{f(A)+f(B)}{2}$.  Moreover,  if  $f$ is  convex, then there exist    unitaries $U$ and $V$ such that
 \begin{align}\label{con-re}
  f\left(\frac{A+B}{2}\right)\leq U^* \frac{f(A)+f(B)}{4} U+V^* \frac{f(A)+f(B)}{4} V.
 \end{align}
 If in addition  $f$ is  monotone increasing, then
  \begin{align}\label{con-re2}
  f\left(\frac{A+B}{2}\right)\leq U^* \frac{f(A)+f(B)}{2} U
 \end{align}
 for some unitary $U$.
Parallel to this, if $f$ is monotone increasing, then $A\leq B$ implies that $f(A)\leq U^*f(B)U$ for some unitary $U$. For a nice survey regarding operator inequalities for real convex functions  see \cite{BurLee}.\par

 Here, we consider jointly concavity$/$convexity of the trace function
\begin{equation} \label{eq-tr}
(A,B) \in {\Bbb M}_k \times {\Bbb M}_m \mapsto \Tr[F_2(A,B)]=\Tr[h[\Phi(f(A))\ \sigma \ \Psi(g(B))]]
\end{equation}
parallel to  \cite[Theorem 4.2]{KY}. This gives in addition a more general setting than \cite{Hi}.

\begin{theorem} \label{thm-3-M}
Let $\Phi: {\Bbb M}_k \mapsto {\Bbb M}_n$ and $\Psi : {\Bbb M}_m \mapsto {\Bbb M}_n$  be positive linear maps  and let $\sigma$ be an operator mean. Then
\begin{enumerate}
\item Let $f,g$ be operator log-convex functions.  If $h(x^{-1})^{-1}$ is monotone increasing and concave, then \eqref{eq-tr} is jointly log-convex.
     If $h(x)$ is monotone increasing and convex, then \eqref{eq-tr} is jointly convex. If $h(x)$ is monotone decreasing and concave, then \eqref{eq-tr} is  jointly concave.
\item Let $f,g$ be operator log-concave functions. If $h(x)$ is monotone increasing and concave,    then \eqref{eq-tr} is jointly concave.  If $h(x)$ is monotone decreasing and convex, then \eqref{eq-tr} is jointly convex. If $h(x^{-1})^{-1}$ is monotone decreasing and concave, then \eqref{eq-tr} is jointly log-convex.
\end{enumerate}
\end{theorem}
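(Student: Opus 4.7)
The plan is to piggyback on the operator-level inequalities already set up inside the proof of Theorem~\ref{th3-mu}, and then extract trace content via two classical principles: Weyl's monotonicity (if $u$ is a scalar monotone increasing function and $A\leq B$ on Hermitians, then $\Tr u(A)\leq\Tr u(B)$, with the reversed inequality for decreasing $u$), and the Hansen--Pedersen trace principle ($X\mapsto\Tr u(X)$ is convex, resp.\ concave, whenever $u$ is scalar convex, resp.\ concave). Write
\[
Y=\Phi(f(A_1\nabla A_2))\,\sigma\,\Psi(g(B_1\nabla B_2)),\qquad M_i=\Phi(f(A_i))\,\sigma\,\Psi(g(B_i))\quad(i=1,2).
\]
Combining~\eqref{qr1} with~\eqref{eq:hs}, operator log-convexity of $f,g$ yields $Y\leq M_1\,!\,M_2$, while the mirror calculation in the proof of Theorem~\ref{th3-mu}(ii) yields $Y\geq M_1\,\nabla\,M_2$ under operator log-concavity. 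Together with the universal bound $M_1!M_2\leq M_1\nabla M_2$ these are all the operator-level ingredients I will need; crucially, neither uses any property of $h$.

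The four non-log assertions then drop out from a uniform chaining argument. Taking case (i) with $h$ monotone increasing and convex as a template,
\[
\Tr h(Y)\leq \Tr h(M_1!M_2)\leq \Tr h(M_1\nabla M_2)\leq \tfrac12\bigl(\Tr h(M_1)+\Tr h(M_2)\bigr),
\]
the first two inequalities by Weyl monotonicity and the third by Hansen--Pedersen convexity of $\Tr h$. The three remaining non-log statements follow from the same template by reversing inequalities according to whether $h$ is decreasing rather than increasing and concave rather than convex, and by starting from $Y\geq M_1\nabla M_2$ in the log-concave regime (ii).

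The two log-convex conclusions are the technical core. Consider case (i): since $\tilde h(x):=h(x^{-1})^{-1}$ is monotone increasing, so is $h$, hence Weyl monotonicity gives $\Tr h(Y)\leq\Tr h(M_1!M_2)$, and it suffices to prove the trace log-sub-multiplicative estimate
\[
\bigl(\Tr h(M_1!M_2)\bigr)^{2}\leq \Tr h(M_1)\,\Tr h(M_2).
\]
The scalar engine is concavity of $\tilde h$: applied at $1/a,1/b$ it yields $\tilde h((1/a+1/b)/2)\geq\tfrac12(h(a)^{-1}+h(b)^{-1})$, whence $h(\mathrm{HM}(a,b))\leq\mathrm{HM}(h(a),h(b))\leq\sqrt{h(a)h(b)}$ by HM$\,\leq\,$GM. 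The plan for the lift is to promote this to a trace majorization form of $h(M_1!M_2)$ being dominated by $h(M_1)!h(M_2)$ (in the spirit of Lemma~\ref{thm-2}), and then apply the Cauchy--Schwarz-type trace inequality $\Tr[X!Y]\leq\Tr[X\sharp Y]\leq\sqrt{\Tr X\,\Tr Y}$ at $X=h(M_1)$, $Y=h(M_2)$. Case (ii) is handled dually by replacing $!$ with $\nabla$ and starting from $Y\geq M_1\nabla M_2$.

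The main obstacle will be precisely this lift: the pointwise scalar inequality is elementary, but promoting $h(M_1!M_2)\leq h(M_1)!h(M_2)$ to non-commuting $M_1,M_2$ without the crutch of $h$ being operator monotone requires either a weak-majorization argument \`a la Ando--Hiai matching eigenvalues of the harmonic mean, or an intermediate reduction through Lemma~\ref{thm-2} combined with the Kubo--Ando transformer inequality~\eqref{im}. Once this trace inequality is secured, joint log-convexity for case (i) follows immediately, and the symmetric argument through arithmetic means handles case (ii).
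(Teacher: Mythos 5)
Your reduction matches the paper's: from the operator log-convexity/log-concavity of $f,g$ together with \eqref{eq:hs} and \eqref{qr1} you get $Y\leq M_1\,!\,M_2$ (resp.\ $Y\geq M_1\,\nabla\,M_2$), and the four non-log assertions then follow exactly as you describe, by eigenvalue monotonicity of $\Tr h$ under the L\"owner order and convexity/concavity of $X\mapsto\Tr h(X)$; the paper phrases these two facts through the unitary-orbit inequalities \eqref{con-re} and \eqref{con-re2}, but after taking traces the content is identical. Your reduction of the log-convex conclusion to $\Tr h(M_1!M_2)\leq\sqrt{\Tr h(M_1)\,\Tr h(M_2)}$ is also sound, and your final step $\Tr[X!Y]\leq \Tr X\,!\,\Tr Y\leq\sqrt{\Tr X\,\Tr Y}$ is precisely how the paper finishes (it actually records the stronger harmonic-mean bound on the traces).

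The genuine gap is the step you yourself flag as ``the main obstacle'': the non-commutative lift of $h(\mathrm{HM}(a,b))\leq\mathrm{HM}(h(a),h(b))$. Neither of your two candidate routes closes it as stated. Lemma~\ref{thm-2} is unavailable because $h$ is not assumed operator monotone, as you note; and a weak-majorization statement $h(M_1)!h(M_2)\prec_w h(M_1!M_2)$ does not directly survive the inversion needed to pass between $\nabla$ and $!$ (weak majorization is not stable under taking inverses, so $\Tr$ of the inverse cannot be controlled from it). The tool the paper uses is the single-unitary Bourin--Lee inequality \eqref{con-re2} applied to the increasing \emph{concave} function $\tilde h(x)=h(x^{-1})^{-1}$: for positive definite $M,N$ there is a unitary $U$ with $\tilde h(M\nabla N)\geq U^*\bigl(\tilde h(M)\nabla\tilde h(N)\bigr)U$, and since conjugation by a unitary commutes exactly with the operator inverse, inverting and substituting $M\mapsto M^{-1}$, $N\mapsto N^{-1}$ yields the paper's inequality \eqref{q-nq2}, namely $h(M!N)\leq U^*[h(M)\,!\,h(N)]U$. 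Taking traces kills the unitary and your Cauchy--Schwarz step completes the argument. So your proof becomes complete once you replace the tentative ``majorization \`a la Ando--Hiai'' by this unitary-orbit version of Jensen's inequality, which the paper has already recorded in Section~3 for exactly this purpose; the dual case (ii), which the paper omits, needs the same device for the decreasing concave $\tilde h$ (there the two-unitary form \eqref{con-re} suffices at the trace level).
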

 Let use state some particular consequences  of Theorem \ref{thm-3-M}. It gives the
 joint convexity (concavity) of the mapping
 $$(A,B)\mapsto\mathrm{Tr}\left\{\Phi(A^p)\sigma\Psi(B^q)\right\}^s$$
for proper exponents $p,q,s$, see \cite[Lemma 3.3]{Hi}.

The function $t\mapsto 1/\log t$ is operator log-convex. Therefore the mapping
  $$(A,B)\mapsto\mathrm{Tr}\left\{\Phi((\log A)^{-1})\sigma\Psi((\log B)^{-1})\right\}^s$$
is jointly log-convex if $0\leq s\leq 1$ and is jointly convex if $s\geq1$. With $h(t)=\exp t$ we derive the joint convexity of
  $$(A,B)\mapsto\mathrm{Tr}\left\{\exp\left[\Phi((\log A)^{-1})\sigma\Psi((\log B)^{-1})\right]\right\}$$
  and
    $$(A,B)\mapsto\mathrm{Tr}\left\{\exp\left[\Phi(A^p)\sigma\Psi(B^q)\right]\right\}\quad (-1\leq p,q\leq0).$$
With $h(t)=\log t$, Theorem \ref{th3-mu} yields the joint concavity of
 $$(A,B)\mapsto\mathrm{Tr}\left\{\log\left[\Phi(A^p)\sigma\Psi(B^q)\right]\right\}\quad (0\leq p,q\leq1).$$
Now we give the proof of Theorem \ref{thm-3-M}.

\begin{proof}
 To prove (ii) we need    almost a  similar argument to (i). So, we only give the proof of part (i). Suppose that  $f,g$ are operator log-convex functions and  assume that $A_1,A_2\in{\Bbb M}_k^{++}$ and $B_1,B_2\in{\Bbb M}_m^{++}$.   It follows from \eqref{eq:hs} and \eqref{qr1} that
\begin{align}\label{eq-n1}
\Phi(f(A_1 \nabla A_2)) \sigma \Psi(g(B_1 \nabla B_2)) & \leq \left( \Phi(f(A_1)) ! \Phi(f(A_2)) \right) \sigma \left( \Psi(g(B_1)) ! \Psi(g(B_2))\right)\nonumber\\
& \leq \left(\Phi(f(A_1)) \sigma \Psi(g(B_1))\right) ! \left( \Phi(f(A_2)) \sigma \Psi(g(B_2)) \right).
\end{align}
If   $h(x^{-1})^{-1}$ is monotone increasing and concave, then  $h$ is monotone increasing and so there exists a unitary $V$ such that
{\small\begin{align}\label{q-nq4}
h\left[ \Phi(f(A_1 \nabla A_2))\ \sigma \ \Psi(g(B_1 \nabla B_2))\right]
& \leq V^* h\left[\left(\Phi(f(A_1))\ \sigma \ \Psi(g(B_1))\right) ! \left( \Phi(f(A_2))\ \sigma \ \Psi(g(B_2)) \right)\right]V.
\end{align}}
 On the other hand,  for all    positive definite matrices $M,N \in {\Bbb M}_n^{++}$, there exists a unitary $U$ such that
\[
h((M\nabla N)^{-1})^{-1} \geq U^*\left( h(M^{-1})^{-1}\nabla h(N^{-1})^{-1}\right)U
\]
and thus we have
\begin{align*}
h((M\nabla N)^{-1}) & \leq \left[ U^*\left( h(M^{-1})^{-1}\nabla h(N^{-1})^{-1}\right)U\right]^{-1} \\
& \leq U^*\left( h(M^{-1})^{-1}\nabla h(N^{-1})^{-1}\right)^{-1}U
\end{align*}
or equivalently
\begin{align}\label{q-nq2}
  h(M!N)\leq U^*[h(M)\ ! \ h(N)]U.
\end{align}
Accordingly we obtain
{\small\begin{align*}
& \Tr\left\{h[\Phi(f(A_1\nabla A_2))\sigma\Psi(g(B_1\nabla B_2))]\right\}\\
& \leq \Tr\left\{ h\left[\left(\Phi(f(A_1)) \sigma \Psi(g(B_1))\right) ! \left( \Phi(f(A_2)) \sigma \Psi(g(B_2)) \right)\right]\right\}\ \ \ \ \mbox{by \eqref{q-nq4}}\\
& \leq \Tr\left\{ h\left[\Phi(f(A_1)) \sigma \Psi(g(B_1))\right] ! h\left[ \Phi(f(A_2)) \sigma \Psi(g(B_2)) \right]\right\}\ \  \ \  \mbox{by \eqref{q-nq2}}\\
& \leq \Tr\left\{ h\left[\Phi(f(A_1)) \sigma \Psi(g(B_1)) \right] \right\} ! \Tr\left\{h\left[ \Phi(f(A_2)) \sigma \Psi(g(B_2))\right]\right\},
\end{align*}}
where the last inequality follows from applying \eqref{im} to $\Tr$. This ensures that if $h(x^{-1})^{-1}$ is monotone increasing and concave,  then \eqref{eq-tr} is jointly log-convex.

Now let $h$ be monotone increasing and convex. From \eqref{q-nq4} we learn that
 {\small\begin{align*}
\Tr\left\{h[\Phi(f(A_1\nabla A_2))\sigma\Psi(g(B_1\nabla B_2))]\right\}
&\leq \Tr\left\{h\left[\left(\Phi(f(A_1))\sigma \Psi(g(B_1))\right)!\left(\Phi(f(A_2))\sigma  \Psi(g(B_2))\right)\right]\right\} \\
&\leq \Tr\left\{h\left[\left(\Phi(f(A_1))\sigma \Psi(g(B_1))\right)\ \nabla \   \left(\Phi(f(A_2))\sigma  \Psi(g(B_2))\right)\right]\right\}\\
&\leq \Tr\left\{h\left[\Phi(f(A_1))\sigma \Psi(g(B_1))\right]\right\}\ \nabla \ \Tr\left\{h\left[\Phi(f(A_2))\sigma  \Psi(g(B_2))\right]\right\},
\end{align*}}
in which
 we utilized  the  operator arithmetic-harmonic mean inequality in the second inequality and apply \eqref{con-re2} for the convexity of $h$ to derive the last inequality. Hence \eqref{eq-tr} is jointly convex, when $h$ is  monotone increasing and convex.

 Next suppose that  $h$ is  monotone decreasing and concave.  From \eqref{eq-n1} we find a unitary $V$ such that
 {\small\begin{align}\label{q-nq4}
h\left[ \Phi(f(A_1 \nabla A_2))\ \sigma \ \Psi(g(B_1 \nabla B_2))\right]
& \geq V^* h\left[\left(\Phi(f(A_1))\ \sigma \ \Psi(g(B_1))\right)\nabla \left( \Phi(f(A_2))\ \sigma \ \Psi(g(B_2)) \right)\right]V,
\end{align}}
  where we use the operator arithmetic-harmonic mean inequality. For every pair of  positive definite matrices $X,Y\in\mathbb{M}_n^{++}$,  by the concavity of $h$ we can apply \eqref{con-re} to find unitaries $U$ and $W$  such that
  \begin{align*}
   h(X\nabla Y)\geq U^*\frac{h(X)+h(Y)}{4}U+W^*\frac{h(X)+h(Y)}{4}W.
  \end{align*}
  Therefore,
  \begin{align*}
\Tr\left\{h[\Phi(f(A_1\nabla A_2))\sigma\Psi(g(B_1\nabla B_2))]\right\}\geq \Tr\left\{h(X\nabla Y)\right\}\geq  \Tr\left\{h(X)\nabla h(Y)\right\}
  \end{align*}
  with $X=\Phi(f(A_1)) \sigma \Psi(g(B_1))$ and $Y=\Phi(f(A_2)) \sigma \Psi(g(B_2))$. This implies that \eqref{eq-tr} is jointly concave, when $h$ is  monotone decreasing and concave. The proof of (i) is now completed.
\end{proof}
  As a particular case of Theorem \ref{thm-3-M}, the next corollary implies the convexity of
 \begin{align}\label{ks}
\mathbb{M}_k^{++}\to\mathbb{C}:\quad A\mapsto\Tr[\Phi(A^p)^{-1/p}].
\end{align}
  We note that the convexity of \eqref{ks} implies the joint convexity of $(A,B)\mapsto(A^p+B^p)^{-1/p}$.  In this direction, it was shown in \cite{Crl-Lb2,Hi} that the mapping
  $$\mathbb{M}_n^+\times\mathbb{M}_n^+\to\mathbb{C}:\ \ (A,B)\mapsto(A^p+B^p)^{1/p}$$
     is jointly convex if and only if $1\leq p\leq 2$.
\begin{corollary} \label{cor-F3}
Let $\Phi : {\Bbb M}_k \mapsto {\Bbb M}_n$ be a unital positive linear map. Then mapping \eqref{ks}
 is convex for all $p\in[-1,1]\backslash\{0\}$.
\end{corollary}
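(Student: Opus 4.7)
\medskip
The plan is to deduce this corollary as a one-variable specialization of Theorem~\ref{thm-3-M}. Setting $\Psi=\Phi$, $f(t)=g(t)=t^p$, $h(t)=t^{-1/p}$, and $\sigma=\nabla$ (the arithmetic mean), the two-variable trace functional in \eqref{eq-tr} becomes
\[
(A,B)\mapsto \Tr\bigl[(\Phi(A^p)\,\nabla\,\Phi(B^p))^{-1/p}\bigr].
\]
Once joint convexity of this two-variable functional on $\mathbb{M}_k^{++}\times\mathbb{M}_k^{++}$ is established, restricting to the diagonal $B=A$ collapses $\Phi(A^p)\nabla\Phi(A^p)$ back to $\Phi(A^p)$ and yields convexity of $A\mapsto\Tr[\Phi(A^p)^{-1/p}]$, since convexity is preserved under pre-composition with the affine embedding $A\mapsto(A,A)$.

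The verification of the hypotheses of Theorem~\ref{thm-3-M} splits into two sub-cases according to the sign of $p$. For $p\in(0,1]$, the function $f(t)=t^p$ is operator monotone, hence operator log-concave by the counterpart to Lemma~\ref{th2-oplogconv}; and since $-1/p\le-1$, the scalar function $h(t)=t^{-1/p}$ is monotone decreasing and convex, so $h''(t)=(-1/p)(-1/p-1)t^{-1/p-2}\ge0$. This matches precisely the hypotheses of part~(ii) of Theorem~\ref{thm-3-M}. For $p\in[-1,0)$, $f(t)=t^p$ is operator monotone decreasing, hence operator log-convex by Lemma~\ref{th2-oplogconv}; and since $-1/p\ge1$, the scalar function $h(t)=t^{-1/p}$ is monotone increasing and convex, matching part~(i). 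Both boundary cases $p=\pm1$ are covered, and $p=0$ is excluded because $h$ degenerates.

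Unitality of $\Phi$ enters only to ensure that the functional is well-defined: if $A\in\mathbb{M}_k^{++}$ is sandwiched between positive scalar multiples of $I$, then $\Phi(I)=I$ together with positivity of $\Phi$ shows $\Phi(A^p)$ is positive definite, so $\Phi(A^p)^{-1/p}$ makes sense. No analytic obstacle arises; the only subtlety is the bookkeeping of matching the correct range of $p$ with the correct part of Theorem~\ref{thm-3-M}, since the log-convexity/log-concavity of $f(t)=t^p$ and the monotonicity direction of $h(t)=t^{-1/p}$ both flip as $p$ crosses zero and one must pair them consistently so as to land on a convexity conclusion rather than a concavity one.
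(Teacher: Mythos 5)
Your proposal is correct and follows essentially the same route as the paper: both apply Theorem~\ref{thm-3-M} with $f(t)=g(t)=t^p$ and $h(t)=t^{-1/p}$, invoking part~(ii) (operator log-concave $f$, decreasing convex $h$) for $0<p\leq 1$ and part~(i) (operator log-convex $f$, increasing convex $h$) for $-1\leq p<0$. The only addition is that you make explicit the diagonal restriction $B=A$ with $\sigma=\nabla$, which the paper leaves implicit.
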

\begin{proof}
In the case of $0<p\leq 1$, $f(x)=x^p$ is operator log-concave and $h(x)=x^{-1/p}$ is monotone decreasing and convex. Hence part (ii) of Theorem \ref{thm-3-M} gives the convexity of  \eqref{ks}. In the case of $-1\leq p<0$, $f(x)=x^p$ is operator log-convex and $h(x)=x^{-1/p}$ is monotone increasing and convex. In this case we use part (i) of Theorem \ref{thm-3-M}.
\end{proof}
\par\medskip


The next theorem gives a complementary result to \cite[Theorem 4.2]{KY}.
\begin{theorem}\label{th-n}
Let $\Phi:{\Bbb M}_k\to{\Bbb M}_n$ and $\Psi:{\Bbb M}_m\to{\Bbb M}_n$  be unital positive linear  maps and let $K\in{\Bbb M}_n$.
If $f_1,f_2$  are operator monotone functions, then \\
{\rm (i)}\ the mapping
 {\small  \begin{align*}
    \mathbb{M}_k^{++}\times\mathbb{M}_m^{++}\to\mathbb{C}:(A,B) \mapsto\Tr\left\{\Phi(f_1(A))^pK^*\Psi(f_2(B))^{1-p}K\right\}
      \end{align*}}
is jointly concave for all $0\leq p\leq 1$; \\
{\rm (ii)}\ the mapping
 {\small  \begin{align*}
    \mathbb{M}_k^{++}\times\mathbb{M}_m^{++}\to\mathbb{C}:(A,B) \mapsto\Tr\left\{\Phi(f_1(A))^pK^*\Psi(f_2(B))^{-1-p}K\right\}
      \end{align*}}
is jointly convex for all $-1\leq p\leq 0$.
\end{theorem}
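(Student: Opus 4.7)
\medskip

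\textbf{Proof proposal.}

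The plan is to regard the mapping $F(A,B)$ as a composition of three ingredients, each of which is either handled by L\"owner-Heinz or by a classical trace inequality: (a) the inner matrix-valued maps $A\mapsto g(A):=\Phi(f_1(A))$ and $B\mapsto h(B):=\Psi(f_2(B))$; (b) separate monotonicity of the outer trace functional; and (c) joint concavity (Lieb) or joint convexity (Ando) of the outer trace functional. Since $f_1,f_2$ are operator monotone on $(0,\infty)$, L\"owner-Heinz yields their operator concavity, and linearity together with positivity of $\Phi$ and $\Psi$ then makes $g$ and $h$ operator concave matrix-valued functions; that is,
\begin{align*}
g(\lambda A_1+(1-\lambda)A_2)&\ge \lambda g(A_1)+(1-\lambda)g(A_2),\\
h(\lambda B_1+(1-\lambda)B_2)&\ge \lambda h(B_1)+(1-\lambda)h(B_2).
\end{align*}

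For part (i) set $L(X,Y)=\Tr\{X^p K^* Y^{1-p} K\}$ with $0\le p\le1$. Lieb's concavity theorem states that $L$ is jointly concave on pairs of positive matrices. Moreover, $L$ is monotone increasing in each variable separately: L\"owner-Heinz makes $X\mapsto X^p$ operator monotone for $p\in[0,1]$, so $X_1\le X_2$ implies $K^*X_1^p K\le K^*X_2^p K$, and pairing with the positive matrix $Y^{1-p}$ under the trace preserves the inequality; the same argument works in the $Y$-slot. With $A_\lambda=\lambda A_1+(1-\lambda)A_2$ and $B_\lambda=\lambda B_1+(1-\lambda)B_2$, operator concavity of $g,h$ combined with the increasing monotonicity of $L$ yields
\[
L\bigl(g(A_\lambda),h(B_\lambda)\bigr)\ge L\bigl(\lambda g(A_1)+(1-\lambda)g(A_2),\ \lambda h(B_1)+(1-\lambda)h(B_2)\bigr),
\]
and joint concavity of $L$ bounds the right-hand side below by $\lambda L(g(A_1),h(B_1))+(1-\lambda)L(g(A_2),h(B_2))$, which is precisely joint concavity of the trace mapping in (i).

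For part (ii), the exponents $p$ and $-1-p$ both lie in $[-1,0]$ and sum to $-1$, so Ando's convexity theorem applies to $L(X,Y)=\Tr\{X^p K^* Y^{-1-p} K\}$ and gives joint convexity. On this range $x\mapsto x^p$ and $x\mapsto x^{-1-p}$ are operator monotone \emph{decreasing} on $(0,\infty)$ (the reciprocal of an operator monotone function is operator monotone decreasing), so $L$ is separately monotone decreasing in each argument. The concavity inequalities for $g$ and $h$ are now \emph{reversed} by the decreasing monotonicity of $L$, yielding an upper bound on $L(g(A_\lambda),h(B_\lambda))$, and joint convexity of $L$ closes the chain with the target upper bound $\lambda F(A_1,B_1)+(1-\lambda)F(A_2,B_2)$.

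The only point requiring genuine input beyond L\"owner-Heinz and elementary positivity of the trace pairing is the invocation of Lieb's concavity theorem in (i) and Ando's convexity theorem in (ii); the rest is the compositional scheme above, and that is the sole place where the argument could potentially run into subtlety (namely, verifying that the Ando-convexity range genuinely covers the pair $(p,-1-p)$ for all $p\in[-1,0]$, which is immediate from $p+(-1-p)=-1$).
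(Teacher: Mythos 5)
Your proposal is correct and follows essentially the same route as the paper: both arguments combine (a) operator concavity of $f_1,f_2$ (from operator monotonicity) transported through the positive linear maps, (b) operator monotonicity of $t\mapsto t^p$ and $t\mapsto t^{1-p}$ (resp.\ operator monotone decrease of $t\mapsto t^p$, $t\mapsto t^{-1-p}$) together with positivity of the trace pairing, and (c) the Lieb concavity / Ando convexity of $(X,Y)\mapsto\Tr\{X^pK^*Y^qK\}$ on the relevant exponent ranges (the paper cites Hiai's Theorem~2.1 for this last ingredient). Your packaging of step (b) as separate monotonicity of the outer functional $L$ is just a restatement of the paper's inequalities \eqref{qp1}--\eqref{qee11}, so no substantive difference.
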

\begin{proof}
Suppose that  $f(x)=x^p$  and  $g(x)=x^{1-p}$  for $p\in[0,1]$. By  setting $\Phi(A)=A$ and $\Psi(A)=K^*AK$ for a fixed matrix $K\in\mathbb{M}_n$ and $h(x)=x$, \cite[Theorem 2.1]{Hi} implies that  $\Tr\left\{F_1(A,B)\right\}$ is jointly concave. In other words,
{\small \begin{align}\label{qee2}
\Tr\left\{(X\nabla Y)^pK^*(Z\nabla W)^{1-p}K\right\}\geq
\Tr\left\{X^pK^*Z^{1-p}K\right\}\nabla \Tr\left\{Y^pK^*W^{1-p}K\right\}
  \end{align}}
for all $X,Y,Z,W\in{\Bbb M}_n^{++}$.
Then assume that  $A_1,A_2\in{\Bbb M}_k^{++}$ and $B_1,B_2\in{\Bbb M}_m^{++}$ and let  $f_1$ and $f_2$ be operator concave functions.  For every $0\leq p\leq1$, the operator monotonicity of $t\mapsto t^p$ and $t\mapsto t^{1-p}$ gives
    \begin{align}\label{qp1}
     \Phi(f_1(A_1\nabla A_2))^p \geq   [\Phi(f_1(A_1))  \nabla  \Phi(f_1(A_2))]^p
     \end{align}
     and
         \begin{align}\label{qp2}
 \Psi(f_2(B_1\nabla B_2))^{1-p}\geq [\Psi(f_2(B_1))\nabla\Psi(f_2(B_2))]^{1-p}.
  \end{align}
  It follows from \eqref{qp1} and \eqref{qp2} that
    {\small\begin{align}\label{qee11}
       &\Tr\left\{\Phi\left(f_1(A_1\nabla A_2)\right)^pK^*\Psi\left(f_2(B_1\nabla B_2)\right)^{1-p}K\right\}\geq \Tr\left\{(X\nabla Y)^pK^*(Z\nabla W)^{1-p}K\right\}
     \end{align}}
  in which we use the   notation
    {\small  \begin{align*}
   X=\Phi(f_1(A_1)),\quad Y=\Phi(f_1(A_2)),\quad Z=\Psi(f_2(B_1)),\quad W=\Psi(f_2(B_2))
  \end{align*}}
for brief. We conclude from \eqref{qee2} and \eqref{qee11} that the trace functional
    {\small  \begin{align}\label{q-kn1}
    \mathbb{M}_k^{++}\times\mathbb{M}_m^{++}\to\mathbb{C}:(A,B) \mapsto\Tr\left\{\Phi(f_1(A))^pK^*\Psi(f_2(B))^{1-p}K\right\}
      \end{align}}
is jointly concave for all $0\leq p\leq 1$, which gives {\rm (i)}.
 Next suppose that $-1\leq p\leq 0$ so that      $t\mapsto t^{p}$ and $t\mapsto t^{-1-p}$ are operator decreasing and we obtain
   \begin{align}\label{qpp1}
     \Phi(f_1(A_1\nabla A_2))^p \leq   [\Phi(f_1(A_1))  \nabla  \Phi(f_1(A_2))]^p
     \end{align}
     and
         \begin{align}\label{qpp2}
 \Psi(f_2(B_1\nabla B_2))^{-1-p}\leq [\Psi(f_2(B_1))\nabla\Psi(f_2(B_2))]^{-1-p}.
  \end{align}

   We conclude  from \eqref{qpp1} and \eqref{qpp2} that
    {\small\begin{align}\label{qee1}
\Tr\left\{\Phi\left(f_1(A_1\nabla A_2)\right)^pK^*\Psi\left(f_2(B_1\nabla B_2)\right)^{-1-p}K\right\} \leq \Tr\left\{(X\nabla Y)^pK^*(Z\nabla W)^{-1-p}K\right\}
     \end{align}}
     with the same notation for $X,Y,Z,W$ as in part (i).
  Now we apply \cite[Theorem 2.1]{Hi} with setting $\Phi(A)=A, \Psi(A)=K^*AK, f(x)=x^p, g(x)=x^{-1-p}$ and $h(x)=x$ to obtain {\rm (ii)}.
\end{proof}

 Next, we consider the jointly convex mappings   involving   multi-linear mappings. Our motivation is a result of     Lieb  (see e.g. \cite{Bh}),  which states that $(A,B)\mapsto A^p\otimes B^{1-p}$ is jointly   concave for every $p\in[0,1]$. In fact,   tensor product mapping $(A,B)\mapsto A^p\otimes B^{1-p}$  is a particular example of the map $(A,B)\mapsto \Phi(A^p, B^{1-p})$ for a bilinear map $\Phi$. So, it is natural to  study jointly convex mappings involving positive multi-linear mappings.  Recall that a mapping $\Phi:{\Bbb M}_n^k\to {\Bbb M}_m$ is called multi-linear  if it is linear in each of its variables.  $\Phi$  is called positive if $\Phi(A_1,\cdots,A_k)\in {\Bbb M}_m^{+}$, when $A_i\in {\Bbb M}_n^{+}$ for all $i=1,\ldots,k$. It is  called unital if $\Phi(I,\cdots,I)=I$.
 For  every positive linear map $\Psi:\mathbb{M}_{n^k}\to\mathbb{M}_m$, the mapping $\Phi:\mathbb{M}_n^k\to\mathbb{M}_m$ defined by $\Phi(A_1,\cdots,A_k)=\Psi(A_1\otimes\cdots\otimes A_k)$ is positive and multi-linear. In particular, $(A_1,\cdots,A_k)\mapsto A_1\otimes\cdots\otimes A_k$  and $(A_1,\cdots,A_k) \mapsto A_1\circ\cdots\circ A_k$ are   positive multi-linear mappings, in which $X\circ Y$ is the Hadamard product of $X$ and $Y$. To see more examples and information about  positive multi-linear mappings, the authors can refer to \cite{DKS}.

Let $\Phi:{\Bbb M}_n^k\to {\Bbb M}_m$ be a positive multilinear mapping. We here study  the joint convexity  of  the mapping
\begin{align}\label{me}
(A_1,\cdots,A_k)\mapsto\Tr\left\{h\left(\Phi(f_1(A_1),\cdots,f_k(A_k))\right)\right\},
\end{align}
where $(A_1,\cdots,A_k)$ is a $k$-tuple of positive definite matrices  on ${\Bbb M}_n$.\par
  We need the following  multi-variable extension of \eqref{im}.
 \begin{lemma}\cite[Proposition 3.6]{DKS}\label{dks}
 Let  $\Phi:{\Bbb M}_n^k\to {\Bbb M}_m$  be a strictly positive multilinear mapping. If  $\sigma$ is an operator  mean with a super-multiplicative representing function, then
\begin{equation*}
  \Phi(A_1\sigma B_1,\cdots,A_k\sigma B_k)\leq\Phi(A_1,\cdots,A_k)\ \sigma \ \Phi(B_1,\cdots,B_k)
\end{equation*}
for all positive definite matrices $A_i,B_i$,\ $(i=1,\cdots,k)$.
 \end{lemma}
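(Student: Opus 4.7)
The plan is to reduce the multilinear inequality to two ingredients: the Kubo--Ando representation of $\sigma$ via its representing function $f$, and the super-multiplicativity of $f$ at the spectral level.

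First I would treat the paradigmatic case of the tensor product $\Phi(X_1,\dots,X_k)=X_1\otimes\cdots\otimes X_k$. Writing $A_i\,\sigma\,B_i=A_i^{1/2}f(T_i)A_i^{1/2}$ with $T_i=A_i^{-1/2}B_iA_i^{-1/2}$, and factoring out $(A_1\otimes\cdots\otimes A_k)^{1/2}$ on both sides, the inequality reduces to
\[
f(T_1)\otimes\cdots\otimes f(T_k)\le f(T_1\otimes\cdots\otimes T_k).
\]
Using the simultaneous spectral decomposition $T_1\otimes\cdots\otimes T_k=\sum_{\mathbf j}\lambda_{1,j_1}\cdots\lambda_{k,j_k}\,E_{1,j_1}\otimes\cdots\otimes E_{k,j_k}$, both sides diagonalize in the same product basis, so the operator inequality collapses to the scalar super-multiplicativity $f(\lambda_{1,j_1}\cdots\lambda_{k,j_k})\ge f(\lambda_{1,j_1})\cdots f(\lambda_{k,j_k})$ applied coefficient-wise.

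Next I would bootstrap from the tensor-product case to a general strictly positive multilinear $\Phi$. The structural point is that the principal examples (tensor product, Hadamard product, and more generally any $\Phi=\Psi\circ\otimes$ with $\Psi$ positive linear, as noted just before the lemma) can be handled by applying the tensor-case inequality first and then the ordinary Kubo--Ando transformer inequality \eqref{im} for $\Psi$:
\begin{align*}
\Phi(A_1\sigma B_1,\dots,A_k\sigma B_k)&=\Psi\bigl((A_1\sigma B_1)\otimes\cdots\otimes(A_k\sigma B_k)\bigr)\\
&\le\Psi\bigl((A_1\otimes\cdots\otimes A_k)\,\sigma\,(B_1\otimes\cdots\otimes B_k)\bigr)\\
&\le\Psi(A_1\otimes\cdots\otimes A_k)\,\sigma\,\Psi(B_1\otimes\cdots\otimes B_k)\\
&=\Phi(A_1,\dots,A_k)\,\sigma\,\Phi(B_1,\dots,B_k).
\end{align*}

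The hard part is the passage to strictly positive multilinear maps that are not known to factor through the tensor product; the natural remedies are either to impose a Stinespring-type complete-positivity assumption, or to argue by induction on $k$, fixing all but one slot, invoking the ordinary transformer inequality, and recombining the resulting cross terms. The super-multiplicativity of $f$ is essential at that recombination step: without it the inequality already fails in the simplest bilinear tensor case, as one sees from $(A_1\nabla B_1)\otimes(A_2\nabla B_2)-(A_1\otimes A_2)\nabla(B_1\otimes B_2)=-\tfrac14(A_1-B_1)\otimes(A_2-B_2)$ for $\sigma=\nabla$, which can be negative.
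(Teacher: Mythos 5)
The paper does not actually prove this lemma---it imports it from \cite{DKS}---so the only question is whether your argument establishes the stated generality, and it does not. Your tensor-product computation is correct: super-multiplicativity of the representing function $f$ gives $f(T_1)\otimes\cdots\otimes f(T_k)\le f(T_1\otimes\cdots\otimes T_k)$ by comparing coefficients in a common eigenbasis, and composing with \eqref{im} handles every $\Phi$ of the form $\Psi\circ\otimes$ with $\Psi$ a positive \emph{linear} map. But a strictly positive multilinear map need not be of this form: the linearization $\tilde\Phi$ on ${\Bbb M}_{n^k}$ always exists, yet positivity of $\Phi$ only controls $\tilde\Phi$ on the cone generated by elementary tensors of positive matrices, not on all of ${\Bbb M}_{n^k}^{+}$. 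You flag this yourself as ``the hard part'' and leave it open, so the lemma as stated is not proved.

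The missing idea is a congruence normalization followed by the Jensen operator inequality; it makes your spectral idea work in general. Put $T_i=A_i^{-1/2}B_iA_i^{-1/2}$ and
\[
\Psi(X_1,\dots,X_k)=\Phi(A_1,\dots,A_k)^{-1/2}\,\Phi\bigl(A_1^{1/2}X_1A_1^{1/2},\dots,A_k^{1/2}X_kA_k^{1/2}\bigr)\,\Phi(A_1,\dots,A_k)^{-1/2},
\]
a unital positive multilinear map (strict positivity of $\Phi$ makes $\Phi(A_1,\dots,A_k)$ invertible). The claimed inequality becomes $\Psi(f(T_1),\dots,f(T_k))\le f(\Psi(T_1,\dots,T_k))$. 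Expanding each $T_i=\sum_j\lambda_{i,j}E_{i,j}$ by multilinearity and using scalar super-multiplicativity gives $\Psi(f(T_1),\dots,f(T_k))\le\sum_{\mathbf j}f(\lambda_{1,j_1}\cdots\lambda_{k,j_k})P_{\mathbf j}$ with $P_{\mathbf j}=\Psi(E_{1,j_1},\dots,E_{k,j_k})\ge 0$ and $\sum_{\mathbf j}P_{\mathbf j}=\Psi(I,\dots,I)=I$. In the tensor case the $P_{\mathbf j}$ are mutually orthogonal projections, so comparing coefficients suffices; in general they are merely positive operators summing to $I$, and you must invoke the Hansen--Pedersen Jensen inequality for the operator concave function $f$ (every positive operator monotone function on $(0,\infty)$ is operator concave) to conclude $\sum_{\mathbf j}f(\mu_{\mathbf j})P_{\mathbf j}\le f\bigl(\sum_{\mathbf j}\mu_{\mathbf j}P_{\mathbf j}\bigr)$. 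That closes the gap. Your closing counterexample for $\sigma=\nabla$ is correct and is indeed explained by the failure of super-multiplicativity of $(1+x)/2$.
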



\begin{theorem}\label{th4-mu}
  Let $\Phi:{\Bbb M}_n^k\to {\Bbb M}_m$ be a positive multilinear mapping  and let  $f_1,\ldots,f_k$ be operator log-convex functions.\\
    {\rm (i)}\ The mapping
    \begin{align}\label{me1}
(A_1,\cdots,A_k)\mapsto\Phi(f_1(A_1),\cdots,f_k(A_k))
\end{align}
is jointly  log-convex.\\
 {\rm (ii)}\ The mapping
    \begin{align}\label{me1}
(A_1,\cdots,A_k)\mapsto\Phi(f_1(A_1),\cdots,f_k(A_k))^{-1}
\end{align}
is jointly  log-concave.\\
    {\rm (iii)}\   The mapping \eqref{me} is jointly convex for every convex monotone increasing function $h$.
    \end{theorem}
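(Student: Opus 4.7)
The three parts are handled in sequence: (i) is the substantive content, and (ii), (iii) follow from it with little additional work.

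For (i), fix $A_1, A_1', \ldots, A_k, A_k' \in {\Bbb M}_n^{++}$. Since each $f_i$ is operator log-convex (equivalently, by Lemma~\ref{th2-oplogconv}, operator monotone decreasing),
\[
f_i(A_i\ \nabla\ A_i') \;\leq\; f_i(A_i)\ \sharp\ f_i(A_i'),\qquad i=1,\ldots,k.
\]
Because $\Phi$ is positive and multilinear, it is monotone in each of its variables (expand $\Phi(\ldots,C_i-B_i,\ldots)$ by multilinearity and use positivity); substituting one coordinate at a time therefore gives
\[
\Phi(f_1(A_1\nabla A_1'),\ldots,f_k(A_k\nabla A_k')) \;\leq\; \Phi(f_1(A_1)\ \sharp\ f_1(A_1'),\ldots,f_k(A_k)\ \sharp\ f_k(A_k')).
\]
I then invoke Lemma~\ref{dks} with $\sigma=\sharp$: the representing function of the geometric mean is $\sqrt{t}$, which is multiplicative and in particular super-multiplicative, so
\[
\Phi(f_1(A_1)\ \sharp\ f_1(A_1'),\ldots,f_k(A_k)\ \sharp\ f_k(A_k')) \;\leq\; \Phi(f_1(A_1),\ldots,f_k(A_k))\ \sharp\ \Phi(f_1(A_1'),\ldots,f_k(A_k')).
\]
Chaining the two estimates is exactly the joint log-convexity claim of (i).

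Part (ii) is then immediate from (i): the identity $(X\ \sharp\ Y)^{-1}=X^{-1}\ \sharp\ Y^{-1}$ together with the order-reversing property of inversion on positive operators translates the log-convexity of $(A_1,\ldots,A_k)\mapsto\Phi(f_1(A_1),\ldots,f_k(A_k))$ into log-concavity of its pointwise inverse.

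For (iii), I combine the log-convexity in (i) with the operator arithmetic-geometric mean inequality $X\ \sharp\ Y \leq X\ \nabla\ Y$ to obtain
\[
\Phi(f_1(A_1\nabla A_1'),\ldots,f_k(A_k\nabla A_k')) \leq \Phi(f_1(A_1),\ldots,f_k(A_k))\ \nabla\ \Phi(f_1(A_1'),\ldots,f_k(A_k')).
\]
Since $h$ is monotone increasing, the unitary version of scalar monotonicity recalled in the paragraph preceding Theorem~\ref{thm-3-M}, namely that $X\leq Y$ implies $h(X)\leq U^*h(Y)U$ for some unitary $U$, lets me apply $h$ and take the trace without losing the inequality. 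Finally, the classical convexity of $X\mapsto\Tr h(X)$ on Hermitian matrices for any real convex $h$, which follows by taking the trace in \eqref{con-re}, yields
\[
\Tr h(\Phi(f_1(A_1\nabla A_1'),\ldots,f_k(A_k\nabla A_k'))) \leq \tfrac{1}{2}\Tr h(\Phi(f_1(A_1),\ldots)) + \tfrac{1}{2}\Tr h(\Phi(f_1(A_1'),\ldots)),
\]
which is the joint convexity required. The main subtlety in the whole argument is that Lemma~\ref{dks} is stated for \emph{strictly} positive multilinear maps, while Theorem~\ref{th4-mu} only assumes positivity; when $\Phi$ fails strict positivity, this is standardly repaired by a perturbation $\Phi+\varepsilon\Phi_0$ with $\Phi_0$ any fixed strictly positive multilinear map and a limit $\varepsilon\downarrow 0$ using continuity of all operations involved.
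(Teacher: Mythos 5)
Your proof is correct and follows essentially the same route as the paper: operator log-convexity of each $f_i$, monotonicity of the positive multilinear map, Lemma~\ref{dks} with $\sigma=\sharp$, the identity $(X\ \sharp\ Y)^{-1}=X^{-1}\ \sharp\ Y^{-1}$ for (ii), and the arithmetic--geometric mean inequality plus the unitary-orbit versions of monotonicity and convexity of $h$ under the trace for (iii). Your closing remark about reconciling the strict-positivity hypothesis of Lemma~\ref{dks} with the mere positivity assumed in the theorem, via a perturbation $\Phi+\varepsilon\Phi_0$, addresses a point the paper passes over silently and is a welcome refinement.
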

\begin{proof}
Suppose that $A_1,\cdots,A_k$ and $B_1,\cdots,B_k$ are $k$-tuples of positive definite matrices in ${\Bbb M}_n$.  For every $i=1,\cdots,k$,  it follows from the operator log-convexity of $f_i$  that
\begin{align*}
  f_i(A_{i}\nabla B_{i})\leq f_i(A_{i})\ \sharp \ f_i(B_{i})\qquad (i=1,\cdots,k),
  \end{align*}
  whence we have
\begin{align}\label{qwwq1}
  \Phi\left(f_1(A_{1}\nabla B_{1}),\cdots,f_k(A_{k}\nabla B_{k})\right)\leq \Phi\left(f_1(A_{1})\ \sharp \ f_1(B_{1}),\cdots,f_k(A_{k})\ \sharp \ f_k(B_{k})\right)
  \end{align}
  from monotonicity of $\Phi$.
The representing function of the operator geometric mean is super-multiplicative. Accordingly   Lemma \ref{dks} can be applied to write
{\small\begin{align}\label{qw1}
  \Phi\left(f_1(A_{1})\sharp f_1(B_{1}),\cdots,f_k(A_{k})\sharp f_k(B_{k})\right)\leq
  \Phi\left(f_1(A_{1}),\cdots,f_k(A_{k})\right)\ \sharp \ \Phi\left(f_1(B_{1}),\cdots,f_k(B_{k})\right).
  \end{align}}
  Part (i) now follows from \eqref{qwwq1} and \eqref{qw1}. Moreover, noting that $(X\sharp Y)^{-1}=X^{-1}\sharp Y^{-1}$, we derive (ii).\par
For (iii), since $h$ is monotone increasing, it follows from  \eqref{qwwq1} and \eqref{qw1} that there exist two unitaries $U$ and $V$ such that
  $$h\left(\Phi\left(f_1(A_{1}\nabla B_{1}),\cdots, f_k(A_{k}\nabla B_{k})\right)\right)\leq U^*h\left(X\sharp Y\right)U\leq U^*V^*h(X\nabla Y)VU,$$
  where we use $X= \Phi\left(f_1(A_{1}),\cdots,f_k(A_{k})\right)$ and $Y=\Phi\left(f_1(B_{1}),\cdots,f_k(B_{k})\right)$ for short. Furthermore, the convexity of  $h$ guarantees the existence of a unitary $W$ such that $h(X\nabla Y)\leq W^*[h(X)\nabla h(Y)]W$. Hence
 {\small  \begin{align*}
\Tr\left\{h\left(\Phi\left(f_1(A_{1}\nabla B_{1}),\cdots, f_k(A_{k}\nabla B_{k})\right)\right)\right\}&\leq\Tr\left\{U^*V^*W^* [h(X)\nabla h(Y)]WVU\right\}\\
&= \Tr\left\{h\left(X\right)\right\}\nabla \ \Tr\left\{h\left(Y\right)\right\},
\end{align*}}
and this completes the proof of (iii).
\end{proof}
Here are some particular consequences of Theorem \ref{th4-mu}.
\begin{corollary}

\textbf{1.}\ Considering $f_1(x)=x^p$ and $f_2(x)=x^q$  shows that
 $$(A,B)\mapsto (A^p\otimes B^q)$$
  is jointly log-convex, when $p,q\in[-1,0]$ and is  jointly (log-)concave, when $p,q\in[0,1]$. In particular, we derive the Lieb's result: the mapping $(A,B)\mapsto A^p\otimes B^{1-p}$ is jointly concave for every $p\in[0,1]$.
 \\
\textbf{2.}\
Let  $\Phi:\mathbb{M}_n\to \mathbb{M}_m$ be a positive linear map. The trace functional  \begin{align}\label{qt1}
(A_1,\ldots,A_k)\mapsto
\mathrm{Tr} \ \Phi\left(A_1^{p_1}\otimes\ldots\otimes A_k^{p_k}\right)^s
\end{align}
is jointly convex, when $-1\leq p_i\leq0$ and $s\geq1$. In particular,
\begin{align}\label{qt1}
(A_1,\ldots,A_k)\mapsto
\mathrm{Tr} \   A_1^{r_1}\otimes\ldots\otimes A_k^{r_k}
\end{align}
is jointly convex for all $r_i\leq 0$, \ $(i=1,\ldots,k)$. \\
\textbf{3.}\ Assume that   $\Phi_i:\mathbb{M}_n\to \mathbb{M}_m$,\, $(i=1,\ldots,k)$ are positive linear mappings. Considering the positive multilinear mapping  $\Phi:\mathbb{M}_n^k\to \mathbb{M}_{m^k}$ defined by $\Phi(A_1,\ldots,A_k)=\Phi_1(A_1)\otimes\cdots\otimes\Phi_k(A_k)$, we obtain the joint log-convexity of
$$(A_1,\ldots,A_k)\mapsto \prod_{i=1}^k\bigotimes \Phi_i(A_i^{p_i}),$$
where $p_i\in[-1,0]$\, $(i=1,\ldots,k)$
and the joint log-concavity of
  $$(A_1,\ldots,A_k)\mapsto \prod_{i=1}^k\bigotimes \Phi_i(A_i^{p_i})^{-1}.$$
  This gives  \cite[Corollary 5.1]{An} and a complementary result to \cite[Theorem 5]{An}. Moreover, for every $s\geq1$, the mapping
 $$(A_1,\ldots,A_k)\mapsto \mathrm{Tr} \left\{\prod_{i=1}^k\bigotimes \Phi_i(A_i^{p_i})^s\right\} $$
is jointly convex.\\
\textbf{4.}\ Consider  the  monotone increasing convex function $h(t)=\exp t$  concludes the joint convexity of
 \begin{align*}
(A_1,\ldots,A_k)\mapsto
\mathrm{Tr}\left\{\exp\left[\Phi_1(A_1^{p_1})\otimes\ldots\otimes\Phi_k(A_k^{p_k})\right]\right\}
\end{align*}
 when $-1\leq p_i\leq0$.

\end{corollary}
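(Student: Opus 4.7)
The plan is to recognise each of the four items as a direct specialisation of Theorem~\ref{th4-mu} by exhibiting an explicit positive multilinear map $\Phi$, a tuple of component functions $f_1,\ldots,f_k$, and (where relevant) an outer scalar function $h$. The only analytical input repeatedly needed is that for every $p\in[-1,0]$ the power function $x\mapsto x^p$ is operator monotone decreasing on $(0,\infty)$ by L\"owner--Heinz, and hence operator log-convex by the characterisation already used in Lemma~\ref{th2-oplogconv}; correspondingly, for $p\in[0,1]$ the function $x\mapsto x^p$ is operator monotone, so $x\mapsto x^{-p}$ is operator log-convex.

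For item~1, I would take the positive bilinear map $\Phi(A,B)=A\otimes B$ and set $f_1(x)=x^p$, $f_2(x)=x^q$. When $p,q\in[-1,0]$ both are operator log-convex, so Theorem~\ref{th4-mu}(i) yields the joint log-convexity of $(A,B)\mapsto A^p\otimes B^q$. For $p,q\in[0,1]$ I would instead apply part (ii) of the theorem to $\tilde f_1(x)=x^{-p}$, $\tilde f_2(x)=x^{-q}$; now $-p,-q\in[-1,0]$, so $\tilde f_1,\tilde f_2$ are operator log-convex, and the identity $(A^{-p}\otimes B^{-q})^{-1}=A^p\otimes B^q$ converts the conclusion of (ii) into the asserted joint log-concavity. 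The subcase $q=1-p$ then recovers Lieb's tensor-product statement.

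For items~2 and~3 I would use that the composition of a positive linear map with the tensor product is positive multilinear. Concretely, in item~2 the map $(A_1,\ldots,A_k)\mapsto\Phi(A_1\otimes\cdots\otimes A_k)$ is positive multilinear, so choosing $f_i(x)=x^{p_i}$ with $p_i\in[-1,0]$ (operator log-convex) and the convex monotone increasing function $h(t)=t^s$ with $s\geq 1$, Theorem~\ref{th4-mu}(iii) immediately gives the joint convexity of the trace functional, the sub-claim about $\mathrm{Tr}\,A_1^{r_1}\otimes\cdots\otimes A_k^{r_k}$ being the case $s=1$, $\Phi=\mathrm{id}$. In item~3 the mapping $\Phi(A_1,\ldots,A_k)=\Phi_1(A_1)\otimes\cdots\otimes\Phi_k(A_k)$ is positive multilinear; parts (i) and (ii) of Theorem~\ref{th4-mu} with $f_i(x)=x^{p_i}$, $p_i\in[-1,0]$, supply respectively the joint log-convexity of $(A_1,\ldots,A_k)\mapsto\bigotimes_i\Phi_i(A_i^{p_i})$ and the joint log-concavity of its pointwise inverse $\bigotimes_i\Phi_i(A_i^{p_i})^{-1}$ (using $(\bigotimes_iX_i)^{-1}=\bigotimes_iX_i^{-1}$), while part (iii) with $h(t)=t^s$ delivers the accompanying trace statement. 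Item~4 is the same application of (iii) to this multilinear map with the convex monotone increasing function $h(t)=e^t$ in place of $t^s$.

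The only non-mechanical step, and hence the main obstacle, is the passage between log-convex and log-concave regimes in item~1 for $p,q\in[0,1]$: Theorem~\ref{th4-mu} is stated only for operator log-convex $f_i$, so to obtain joint log-concavity in the operator monotone regime one must recognise that applying part (ii) to the inverted powers $x\mapsto x^{-p}$, $x\mapsto x^{-q}$ and then inverting the output of $\Phi$ is what produces the desired statement. Once this simple dualisation device is in hand, each remaining item reduces to a substitution together with a one-line verification that the chosen $\Phi$ is positive multilinear, which is immediate in every case above.
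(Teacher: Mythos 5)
Your identifications are exactly the intended derivation: the paper gives no proof beyond declaring these ``particular consequences'' of Theorem~\ref{th4-mu}, and your dualization device for the regime $p,q\in[0,1]$ (apply part (ii) to $x^{-p}$, $x^{-q}$ and use $(A^{-p}\otimes B^{-q})^{-1}=A^{p}\otimes B^{q}$) is indeed the only way to extract the log-concave statements from a theorem phrased solely for operator log-convex $f_i$. Items 2--4 are matched correctly to parts (i)--(iii) with the multilinear maps $\Psi(A_1\otimes\cdots\otimes A_k)$ and $\Phi_1(A_1)\otimes\cdots\otimes\Phi_k(A_k)$ and with $h(t)=t^{s}$ or $h(t)=e^{t}$.

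There is, however, a genuine gap in item 1, inherited from the statement but not closed by your argument: part (ii) of Theorem~\ref{th4-mu} yields joint \emph{log}-concavity, i.e.
\[
(A_1\nabla A_2)^{p}\otimes(B_1\nabla B_2)^{q}\ \geq\ \left(A_1^{p}\otimes B_1^{q}\right)\ \sharp\ \left(A_2^{p}\otimes B_2^{q}\right),
\]
and since $X\sharp Y\leq X\nabla Y$ this is strictly weaker than joint concavity, which requires domination of the arithmetic mean of the values. Hence the sentence ``the subcase $q=1-p$ then recovers Lieb's tensor-product statement'' does not follow: Lieb's theorem asserts joint \emph{concavity} of $(A,B)\mapsto A^{p}\otimes B^{1-p}$, and no specialization of Theorem~\ref{th4-mu} produces it. Indeed for $p=q=1$ the map $(A,B)\mapsto A\otimes B$ is jointly log-concave by the display above but is not jointly concave, since its midpoint concavity defect equals $-\tfrac14(A_1-A_2)\otimes(B_1-B_2)$, which is not sign-definite; Ando's theorem restricts joint concavity of $A^{p}\otimes B^{q}$ to $p,q\geq0$ with $p+q\leq1$. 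Recovering Lieb's concavity therefore requires the original Lieb--Ando argument (or the mean-based machinery of Theorem~\ref{th3-mu}), not a substitution into Theorem~\ref{th4-mu}. A secondary, smaller point: in item 2 the sub-claim for all $r_i\leq0$ is not ``the case $s=1$'' of your substitution, because $x\mapsto x^{r}$ is operator monotone decreasing, hence operator log-convex, only for $r\in[-1,0]$; for $r<-1$ the convexity of $\Tr\,A_1^{r_1}\otimes\cdots\otimes A_k^{r_k}=\prod_i\Tr\,A_i^{r_i}$ needs a separate justification.
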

\medskip

  \section{Minkowski type operator inequalities}

If $a_1,\ldots,a_n$ and $b_1,\ldots, b_n$ are  positive real numbers, then the Minkowski inequality asserts  that
\begin{equation} \label{eq:M1}
 \left( \sum_{i=1}^n (a_i+b_i)^p \right)^{\frac{1}{p}} \leq \left( \sum_{i=1}^n a_i^p\right)^{\frac{1}{p}} + \left( \sum_{i=1}^n b_i^p\right)^{\frac{1}{p}}\qquad \mbox{for $p\geq 1$}
\end{equation}
and
\begin{equation} \label{eq:M2}
 \left( \sum_{i=1}^n (a_i+b_i)^p \right)^{\frac{1}{p}} \geq \left( \sum_{i=1}^n a_i^p\right)^{\frac{1}{p}} + \left( \sum_{i=1}^n b_i^p\right)^{\frac{1}{p}}\qquad \mbox{for $p<0$ \ or \ $0<p<1$}.
\end{equation}
 The Minkowski inequality is one of the most fundamental inequalities in functional analysis. As a trace version of the Minkowski inequality, Carlen and Lieb showed that  the mapping
\[
\Phi_p(A_1,\ldots,A_k) = \Tr\left[ (A_1^p+\cdots +A_k^p)^{1/p}\right],\qquad (A_i\in\mathbb{M}_n^{++})
\]
is jointly concave for every $0<p\leq 1$.
The concavity of $\Phi_p$  results to the Minkowski trace inequality
\[ \Tr\left[\left( \sum_{i=1}^n (A_i+B_i)^p\right)^{1/p}\right] \geq \Tr\left[\left(\sum_{i=1}^n A_i^p\right)^{1/p}\right] + \Tr\left[\left(\sum_{i=1}^n B_i^p\right)^{1/p}\right]. \]

We want to consider the operator versions of \eqref{eq:M1} and \eqref{eq:M2}. Before that, we note that  Corollary~\ref{cor-F3} gives the following deformed Minkowski type trace inequalities:
\begin{corollary}
Let $A$ and $B$ be positive definite matrices, and $\Phi$ be a unital positive linear map. Then
\[
\Tr\left[ \Phi((A+B)^p)^{-1/p}\right] \leq \Tr\left[ \Phi(A^p)^{-1/p}\right]+\Tr\left[ \Phi(B^p)^{-1/p}\right]
\]
for all $p\in[-1,1]\backslash\{0\}$.
\end{corollary}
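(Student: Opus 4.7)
The plan is to derive the stated inequality directly from Corollary \ref{cor-F3}, which already supplies convexity of the functional $F(A) := \Tr[\Phi(A^p)^{-1/p}]$ for $p \in [-1,1]\setminus\{0\}$. What remains is the standard passage from convexity to subadditivity, which requires positive homogeneity of degree $1$.

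First I would verify homogeneity. For any scalar $\lambda > 0$, linearity of $\Phi$ gives $\Phi((\lambda A)^p) = \lambda^p \Phi(A^p)$, so
\[
\Phi((\lambda A)^p)^{-1/p} = \lambda^{-1}\, \Phi(A^p)^{-1/p},
\]
and hence $F(\lambda A) = \lambda F(A)$. This step is a one-line calculation and is not the obstacle.

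Next I would combine this with the convexity supplied by Corollary \ref{cor-F3}. Applying convexity at the midpoint yields
\[
F\!\left(\frac{A+B}{2}\right) \leq \frac{1}{2} F(A) + \frac{1}{2} F(B),
\]
and multiplying by $2$ and invoking degree-$1$ homogeneity gives
\[
F(A+B) = 2\, F\!\left(\frac{A+B}{2}\right) \leq F(A) + F(B),
\]
which is precisely the stated inequality. Since both tools are already in place, there is no real obstacle; the only thing to check carefully is the sign in the exponent when $p$ is negative, but the computation $\lambda^{-p/p} = \lambda^{-1}$ is valid for every $p \neq 0$, so the argument covers the full range $p \in [-1,1]\setminus\{0\}$ uniformly. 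Hence the proof essentially reduces to the homogeneity observation plus the general principle that a positively $1$-homogeneous convex functional is subadditive.
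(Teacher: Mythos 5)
Your overall strategy --- deduce the inequality from the convexity supplied by Corollary \ref{cor-F3} plus a homogeneity argument --- is exactly the route the paper intends (the paper offers no further proof beyond citing Corollary \ref{cor-F3}). However, there is a concrete error in your homogeneity step, and your own displayed computation contradicts the conclusion you draw from it. You correctly find
\[
\Phi((\lambda A)^p)^{-1/p}=(\lambda^p)^{-1/p}\,\Phi(A^p)^{-1/p}=\lambda^{-1}\,\Phi(A^p)^{-1/p},
\]
which means $F(\lambda A)=\lambda^{-1}F(A)$: the functional $F(A)=\Tr\left[\Phi(A^p)^{-1/p}\right]$ is positively homogeneous of degree $-1$, not degree $1$. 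Consequently the identity $F(A+B)=2F\left(\tfrac{A+B}{2}\right)$ is false (the correct one is $F(A+B)=\tfrac12 F\left(\tfrac{A+B}{2}\right)$), and the general principle ``convex $+$ positively $1$-homogeneous $\Rightarrow$ subadditive'' simply does not apply to this $F$.

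The argument is easily repaired and in fact yields more than is asked. Using the correct homogeneity together with the midpoint convexity from Corollary \ref{cor-F3},
\[
F(A+B)=\tfrac12\,F\!\left(\tfrac{A+B}{2}\right)\le \tfrac12\cdot\tfrac12\bigl(F(A)+F(B)\bigr)=\tfrac14\bigl(F(A)+F(B)\bigr),
\]
and since $F(A)=\Tr\left[\Phi(A^p)^{-1/p}\right]\ge 0$ (it is the trace of a positive matrix), the right-hand side is dominated by $F(A)+F(B)$, which gives the stated inequality (with the stronger constant $\tfrac14$, consistent with the scalar AM--HM inequality at $p=1$). So the conclusion stands, but as written your proof contains a sign-of-homogeneity mistake that must be fixed before the subadditivity step is valid.
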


Now, from the viewpoint of operator inequalities, we would expect the following Minkowski inequality for operators: For positive invertible operators $A_1,\ldots,A_n$ and $B_1,\ldots,B_n$,
\begin{equation} \label{eq:OM1}
 \left( \sum_{i=1}^n (A_i+B_i)^p \right)^{\frac{1}{p}} \leq \left( \sum_{i=1}^n A_i^p\right)^{\frac{1}{p}} + \left( \sum_{i=1}^n B_i^p\right)^{\frac{1}{p}}\qquad \mbox{for $p\geq 1$}.
\end{equation}

Moreover, let the map $\Phi:\mathcal{B}(\mathscr{H})\oplus \cdots \oplus \mathcal{B}(\mathscr{H}) \mapsto \mathcal{B}(\mathscr{H})$ be defined by
\[
\Phi(A_1\oplus \cdots \oplus A_n)=\frac{1}{n}(A_1+\cdots+A_n),
\]
whence $\Phi$ is a unital positive linear map and the Minkowski operator inequality \eqref{eq:OM1} turns to a more general form  as
\[
\Phi((\Bbb{A}+\Bbb{B})^p)^{\frac{1}{p}} \leq \Phi(\Bbb{A}^p)^{\frac{1}{p}}+\Phi(\Bbb{B}^p)^{\frac{1}{p}} \qquad \mbox{for $p\geq 1$},
\]
where $\Bbb{A}=A_1\oplus \cdots \oplus A_n$ and $\Bbb{B}=B_1\oplus \cdots \oplus B_n$. In that way, convexity of the operator mapping $A\mapsto \Phi(A^p)^{1/p}$ for $A\in \mathcal{B}(\mathscr{H})^{++}$ is equivalent to the Minkowski type  operator inequality
\begin{align}\label{eq:MO}
\Phi\left((A+B)^p\right)^{1/p}\leq \Phi\left(A^p\right)^{1/p}+\Phi\left(B^p\right)^{1/p}.
\end{align}
Thus, we would expect the  Minkowski type operator inequality \eqref{eq:MO} for positive invertible operators $A,B$.

However, by the non-commutativity of operators, we have the following counterexamples: Let
\[
A_1=\begin{pmatrix} 3 & -1 & 0  \\ -1 & 1 & 0 \\ 0 & 0 & 1 \end{pmatrix}, A_2=\begin{pmatrix} 1 & 1 & 0  \\ 1 & 1 & 0 \\ 0 & 0 & 1 \end{pmatrix}, B_1=\begin{pmatrix} 1 & 0 & 0 \\ 0 & 1 & -1 \\ 0 & -1 & 1 \end{pmatrix}, \ \mbox{and} \ B_2=\begin{pmatrix} 1 & 0 & 0 \\ 0 & 1 & 1 \\ 0 & 1 & 2 \end{pmatrix}.
\]
In the case of $n=2$ and $p=2$, an easy computation yields
\begin{align*}
& (A_1^2+A_2^2)^{1/2}+(B_1^2+B_2^2)^{1/2}-\left((A_1+B_1)^2+(A_2+B_2)^2\right)^{1/2}\\
& = \begin{pmatrix} 0.180869 & -0.119435 & -0.238421 \\ -0.119435 & 0.501802 & 0.0713442 \\ -0.238421 & 0.0713442 & 0.188193 \end{pmatrix} \not\geq 0,
\end{align*}
because its eigenvalues are $\{ 0.603875, 0.32367, -0.0562778 \}$. Therefore, the operator version \eqref{eq:OM1} does not hold in general.\par
  We refer the reader to \cite{Brn-Hi2,Crl-Lb,Crl-Lb2} to see nice     studies  of  Minkowski type inequalities  for trace mappings and operator means.

In this section, we present  Minkowski type operator inequalities for a unial positive linear map by using a generalized Kantorovich constant. As an application, we have an estimate for the operator valued determinants of a unital positive linear map by using the Specht ratio.\par

First of all, we recall the definition of the generalized condition number, the generalized Kantorovich constant and the Specht ratio,   see \cite[pp.70--pp.71,Definition 2.2]{FMPS}. Following Turing \cite{Turing}, the condition number $h=h(A)$ of an invertible operator $A$ is defined by $h(A)=\NORM{A} \NORM{A^{-1}}$, where $\NORM{\cdot}$ stands for the operator norm. If a positive invertible operator $A$ satisfies the condition $mI\leq A\leq MI$, then it is thought as $M=\NORM{A}$ and $m=\NORM{A^{-1}}^{-1}$, so that $h=h(A)=\frac{M}{m}$. It is called the generalized condition number. The generalized Kantorovich constant $K(h,p)$ is defined by
\begin{equation} \label{eq:K}
K(h,p)=\frac{h^p-h}{(p-1)(h-1)}\left( \frac{p-1}{p} \frac{h^p-1}{h^p-h}\right)^p \qquad \mbox{for all $p\in {\Bbb R}$}
\end{equation}
and the Specht ratio $S(h)$ is defined by
\begin{equation} \label{eq:S}
S(h)=\frac{(h-1)h^{\frac{1}{h-1}}}{e \log h}\quad (h\not= 1) \qquad \mbox{and}\qquad S(1)=1.
\end{equation}

We mention some important properties of $K(h,p)$ and $S(h)$:
\begin{lemma}{\cite[Theorem 2.54, Theorem 2.56]{FMPS}} \label{lem-KS}
Let $h>0$ be given. The following properties hold:
\begin{enumerate}
\item[(1)] $K(h,p)=K(h^{-1},p)$ for all $p\in {\Bbb R}$.
\item[(2)] $K(h,p)=K(h,1-p)$ for all $p\in {\Bbb R}$.
\item[(3)] $K(h,0)=K(h,1)=1$ and $K(1,p)=1$ for all $p\in {\Bbb R}$.
\item[(4)] $K(h^r,\frac{p}{r})^{\frac{1}{p}}=K(h^p,\frac{r}{p})^{-\frac{1}{r}}$ for all $pr\not= 0$.
\item[(5)] $\lim_{r\to 0} K(h^r,\frac{p}{r})=S(h^p)$.
\end{enumerate}
\end{lemma}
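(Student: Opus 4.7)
The strategy is to prove (1)--(4) by direct algebraic manipulation of the defining formula \eqref{eq:K}, and to prove (5) by a careful $r\to 0$ expansion. For (1), I would substitute $h\mapsto h^{-1}$ in \eqref{eq:K}, factor the appropriate power of $h^{-1}$ out of the numerator $h^{-p}-h^{-1}$ and the denominator $h^{-1}-1$, and similarly out of the inner ratio $\tfrac{h^{-p}-1}{h^{-p}-h^{-1}}$; after collecting sign flips (since $h^{-1}-1=-(h-1)/h$) the expression collapses to $K(h,p)$. For (2), the analogous substitution $p\mapsto 1-p$ swaps the roles of $h^p$ and $h^{1-p}$, and the four factors repackage to the original. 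Part (3) consists of three routine $0/0$-type limits in \eqref{eq:K}: at $p=0$ and $p=1$, expand $h^p=1+p\log h+O(p^2)$ and apply L'H\^opital; at $h=1$, expand $h^p=1+p(h-1)+O((h-1)^2)$. All three evaluate to $1$.

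For (4), setting $u=h^r$ and $v=h^p$ so that $u^{p/r}=v$ and $v^{r/p}=u$, both $K(u,p/r)$ and $K(v,r/p)$ expand via \eqref{eq:K} into expressions involving only $u-1$, $v-1$, $v-u$, and the exponents $p/r$, $r/p$. Raising the former to $1/p$ and the latter to $-1/r$, the exponent on the inner ratios becomes $1/r$ and $-1/p$ respectively; after distributing signs between $v-u$ and $u-v$ and matching the remaining algebraic factors, the two sides coincide. This reflects a self-dual structure of $K$ under the swap $(h,p)\leftrightarrow(h^p,1/p)$.

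The genuinely analytic part of the lemma is (5). Decompose
\[
K(h^r,p/r)=\alpha(r)\,\beta(r)^{p/r},\qquad \alpha(r)=\frac{h^p-h^r}{(p/r-1)(h^r-1)},\quad \beta(r)=\frac{p/r-1}{p/r}\cdot\frac{h^p-1}{h^p-h^r},
\]
and set $x=h^p$. Using $h^r=1+r\log h+O(r^2)$, the denominator $(p/r-1)(h^r-1)$ tends to $p\log h$, so $\alpha(r)\to\tfrac{x-1}{p\log h}=\tfrac{x-1}{\log x}$. Taylor-expanding $\beta(r)=1+r\!\left(\tfrac{\log h}{x-1}-\tfrac{1}{p}\right)+O(r^2)$ and invoking the elementary limit $(1+r\gamma+O(r^2))^{p/r}\to e^{p\gamma}$ gives $\beta(r)^{p/r}\to e^{\log x/(x-1)-1}=x^{1/(x-1)}/e$. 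Multiplying the two factors yields $\tfrac{(x-1)\,x^{1/(x-1)}}{e\log x}=S(x)$, as claimed. The main obstacle, and the only step requiring real care, is the bookkeeping of the first-order coefficient of $\beta(r)$: the contributions from $\tfrac{p/r-1}{p/r}$ and from $\tfrac{h^p-1}{h^p-h^r}$ are both of order $r$ and do not cancel, and one must retain the $O(r^2)$ correction in $h^r$ throughout to extract the correct limit.
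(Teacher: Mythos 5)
Your proposal is correct. Note, however, that the paper does not prove this lemma at all: it is quoted verbatim from \cite[Theorems 2.54 and 2.56]{FMPS}, so there is no "paper proof" to compare against, and your sketch supplies a self-contained verification of what the authors simply import. I checked the details: (1), (2) and (4) do collapse exactly as you describe (e.g.\ in (4), with $u=h^r$, $v=h^p$ one gets $K(h^r,p/r)^{1/p}=\bigl(\tfrac{r(v-u)}{(p-r)(u-1)}\bigr)^{1/p}\bigl(\tfrac{(p-r)(v-1)}{p(v-u)}\bigr)^{1/r}$ and the same expression for $K(h^p,r/p)^{-1/r}$); the three limits in (3) are routine once one observes that $K(h,0)=K(h,1)$ follows from (2); and your computation of (5) is correct, with $\alpha(r)\to\tfrac{x-1}{\log x}$ and $\beta(r)^{p/r}\to x^{1/(x-1)}/e$ combining to $S(x)$ for $x=h^p$. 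One small economy worth noting: (5) can be reduced to a one-variable limit by first invoking (4), since $K(h^r,p/r)=K(h^p,r/p)^{-p/r}$, so it suffices to show $\lim_{s\to 0}K(x,s)^{-1/s}=S(x)$; this avoids carrying two parameters through the expansion, though your direct route is equally valid.
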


To present the main theorem of this section, we need the following reverse Jensen operator inequalities, see \cite[Lemma 4.3]{FMPS}:
\begin{lemma} \label{lem-RJ}
Let $\Phi:\mathcal{B}(\mathscr{H})\mapsto \mathcal{B}(\mathscr{K})$ be a unital positive linear map, and let $A$ be a positive invertible operator such that $mI\leq A\leq MI$ for some scalars $0<m<M$. Then
\begin{enumerate}
\item[(1)] $K(h,p)\Phi(A)^p \leq \Phi(A^p)\leq \Phi(A)^p$ \quad for $0<p\leq 1$;
\item[(2)]  $\Phi(A)^p \leq \Phi(A^p)\leq K(h,p) \Phi(A)^p$ \quad for $-1\leq p<0$ or $1<p\leq 2$;
\item[(3)]  $K(h,p)^{-1}\Phi(A)^p \leq \Phi(A^p)\leq K(h,p) \Phi(A)^p$ \quad for $p\leq -1$ or $2<p$,
\end{enumerate}
where $h=M/m$ and the generalized Kantorovich constant $K(h,p)$ is defined by \eqref{eq:K}
\end{lemma}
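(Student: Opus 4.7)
The plan is to decompose each assertion into two one-sided inequalities, using (a) the unital operator Jensen inequality for those power functions that are operator convex or operator concave, and (b) the Mond--Pe\v{c}ari\'c method of secant/tangent line comparison over $[m,M]$ for the reverse bounds.

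First, by L\"owner--Heinz, $t\mapsto t^p$ is operator concave on $(0,\infty)$ for $p\in[0,1]$ and operator convex for $p\in[-1,0]\cup[1,2]$. The unital Jensen operator inequality applied to $A$ with $mI\leq A\leq MI$ therefore gives $\Phi(A^p)\leq\Phi(A)^p$ for $p\in(0,1]$ (the right inequality of (1)) and $\Phi(A^p)\geq\Phi(A)^p$ for $p\in[-1,0)\cup(1,2]$ (the left inequality of (2)). These are the ``free'' one-sided bounds.

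For the converse Mond--Pe\v{c}ari\'c bounds, introduce the secant line of $t^p$ over $[m,M]$,
\[
L_p(t) := m^p + \frac{M^p-m^p}{M-m}(t-m),
\]
and note that $L_p(\Phi(A))=\Phi(L_p(A))$ by affinity of $L_p$ and by linearity and unitality of $\Phi$, with $\Phi(A)$ having spectrum in $[m,M]$. When $p\in(0,1)$, real concavity gives $L_p(t)\leq t^p$ on $[m,M]$, hence $L_p(\Phi(A))\leq\Phi(A^p)$; a one-variable calculus computation identifies
\[
\min_{t\in[m,M]}\frac{L_p(t)}{t^p}=K(h,p),
\]
so that $L_p(\Phi(A))\geq K(h,p)\Phi(A)^p$ via the functional calculus on $\Phi(A)$, yielding the left inequality of (1). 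Dually, for $p\in(-\infty,0)\cup(1,\infty)$, real convexity gives $L_p(t)\geq t^p$ on $[m,M]$ and $\max_{t\in[m,M]}L_p(t)/t^p=K(h,p)$, yielding the right inequality of (2) and the right inequality of (3).

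For the remaining lower bound in (3), operator convexity fails, so Jensen's inequality is unavailable. Instead, take the tangent line $T(t):=pt_0^{p-1}t+(1-p)t_0^p$ to $t^p$ at the unique $t_0\in(m,M)$ where $T$ is parallel to $L_p$, i.e., $pt_0^{p-1}=(M^p-m^p)/(M-m)$. Real convexity then gives $T(t)\leq t^p$ on $[m,M]$, hence $T(\Phi(A))\leq\Phi(A^p)$; a further calculus computation shows $\min_{t\in[m,M]}T(t)/t^p=K(h,p)^{-1}$, producing $K(h,p)^{-1}\Phi(A)^p\leq\Phi(A^p)$. The principal technical obstacle is the scalar extremum calculation identifying the min/max of $L_p(t)/t^p$ and $T(t)/t^p$ with $K(h,p)$ and $K(h,p)^{-1}$, together with the careful case analysis in $p$; these are classical computations worked out in detail in \cite{FMPS}.
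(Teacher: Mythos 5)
The paper does not prove this lemma at all --- it is quoted verbatim from \cite[Lemma 4.3]{FMPS} --- so your proposal is compared against the standard Mond--Pe\v{c}ari\'c argument that the cited reference carries out. Your decomposition is exactly that argument: operator Jensen for the ``free'' one-sided bounds in (1) and (2), and secant-line comparison $L_p(t)\lessgtr t^p$ with the extremal ratio $\min/\max_{[m,M]}L_p(t)/t^p=K(h,p)$ for the Kantorovich-type converses. All of that part is correct.

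There is, however, a genuine error in the last step, the lower bound of (3). You take the tangent $T$ to $t^p$ at the point $t_0$ where $T$ is \emph{parallel to the chord}, i.e.\ $pt_0^{p-1}=(M^p-m^p)/(M-m)$, and claim $\min_{t\in[m,M]}T(t)/t^p=K(h,p)^{-1}$. That choice of $t_0$ is the optimal one for the \emph{difference-type} (additive) converse, not for the ratio-type one, and with it the claimed identity fails; the minimum can even be negative, making the resulting bound vacuous. Concretely, take $m=1$, $M=2$, $p=3$, so $h=2$ and $K(2,3)^{-1}=243/343\approx 0.709$. The parallel tangent point is $t_0=\sqrt{7/3}$, giving $T(t)=7t-2(7/3)^{3/2}$ and $T(1)=7-2(7/3)^{3/2}\approx-0.128<0$, so $\min_{[1,2]}T(t)/t^3<0\neq K(2,3)^{-1}$. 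The correct tangent point for the ratio bound is determined by equalizing the ratio at the two endpoints, $T_{t_0}(m)/m^p=T_{t_0}(M)/M^p$ (here $t_0=9/7$), and only then does $\max_{t_0}\min_{t\in[m,M]}T_{t_0}(t)/t^p$ equal $K(h,p)^{-1}$ (for $t_0=9/7$ one indeed gets $243/343$). So the architecture of your proof is right, but the scalar optimization in (3) must be redone with the correct $t_0$; as stated, the step ``$\min_{t\in[m,M]}T(t)/t^p=K(h,p)^{-1}$'' is false.
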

Though the Minkowski type operator inequality \eqref{eq:MO} does not hold in general, by terms of the generalized Kantorovich constant, we obtain the following estimate of the Minkowski type operator inequality:

\begin{theorem} \label{thm-MO}
Let $A$ and $B$ be positive invertible operators such that $\mathrm{Sp}(A),\mathrm{Sp}(B)\subseteq [m,M]$  for some scalars $0<m<M$. If  $\Phi:\mathcal{B}(\mathscr{H})\mapsto \mathcal{B}(\mathscr{K})$ is a unital positive linear map,  then
{\small\begin{equation} \label{eq:MO1}
K(h,p)^{-\frac{1}{p}}\left[ \Phi(A^p)^{\frac{1}{p}}+\Phi(B^p)^{\frac{1}{p}}\right] \leq \Phi((A+B)^p)^{\frac{1}{p}} \leq K(h,p)^{\frac{1}{p}} \left[ \Phi(A^p)^{\frac{1}{p}}+\Phi(B^p)^{\frac{1}{p}}\right]
\end{equation}}
for all $p\geq 1$;
{\small\begin{equation} \label{eq:MO2}
K(h,p)^{\frac{1}{p}}\left[ \Phi(A^p)^{\frac{1}{p}}+\Phi(B^p)^{\frac{1}{p}}\right] \leq \Phi((A+B)^p)^{\frac{1}{p}} \leq K(h,p)^{-\frac{1}{p}} \left[ \Phi(A^p)^{\frac{1}{p}}+\Phi(B^p)^{\frac{1}{p}}\right]
 \end{equation}}
for all $p\leq -1$ or \ $\frac{1}{2}\leq p\leq 1$;
{\small\begin{equation} \label{eq:MO3}
K(h,p)^{\frac{2}{p}}\left[ \Phi(A^p)^{\frac{1}{p}}+\Phi(B^p)^{\frac{1}{p}}\right] \leq \Phi((A+B)^p)^{\frac{1}{p}} \leq K(h,p)^{-\frac{2}{p}} \left[ \Phi(A^p)^{\frac{1}{p}}+\Phi(B^p)^{\frac{1}{p}}\right]
 \end{equation}}
for $-1<p<0$ or \ $0<p<\frac{1}{2}$, where $h=M/m$ and the generalized Kantorovich constant $K(h,p)$ is defined by \eqref{eq:K}.
\end{theorem}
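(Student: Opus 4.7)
I plan to deduce the claimed three-term sandwich from the two-term Kantorovich estimates of Lemma~\ref{lem-RJ}, using the linearity $\Phi(A+B)=\Phi(A)+\Phi(B)$ as the bridge. The first observation is that, since $\mathrm{Sp}(A),\mathrm{Sp}(B)\subseteq[m,M]$, one has $\mathrm{Sp}(A+B)\subseteq[2m,2M]$, an interval with the same condition number $h=M/m$; hence Lemma~\ref{lem-RJ} applies to each of $A$, $B$, and $A+B$ with a common constant $K(h,p)$.

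\textbf{Case $p\ge 1$ (the model case).} Parts (2)--(3) of Lemma~\ref{lem-RJ} give $\Phi(X^p)\le K(h,p)\Phi(X)^p$ for $X\in\{A,B,A+B\}$, and since $1/p\in(0,1]$ the map $t\mapsto t^{1/p}$ is operator monotone, whence
\[
\Phi(X^p)^{1/p}\le K(h,p)^{1/p}\,\Phi(X).
\]
The matching lower bound from Lemma~\ref{lem-RJ}(2) (valid for $1\le p\le 2$, where $t^p$ is operator convex) gives $\Phi(X)\le \Phi(X^p)^{1/p}$. Applying the first estimate to $X=A+B$ and the second to $X=A,B$, and using $\Phi(A+B)=\Phi(A)+\Phi(B)$, one obtains
\[
\Phi\bigl((A+B)^p\bigr)^{1/p}\le K(h,p)^{1/p}[\Phi(A)+\Phi(B)]\le K(h,p)^{1/p}\bigl[\Phi(A^p)^{1/p}+\Phi(B^p)^{1/p}\bigr],
\]
which is the right half of \eqref{eq:MO1}. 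The left half follows symmetrically: $\Phi((A+B)^p)^{1/p}\ge \Phi(A)+\Phi(B)\ge K(h,p)^{-1/p}[\Phi(A^p)^{1/p}+\Phi(B^p)^{1/p}]$, where the last step uses $\Phi(X)\ge K(h,p)^{-1/p}\Phi(X^p)^{1/p}$ derived by inverting the upper Kantorovich bound.

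\textbf{The other ranges.} For $p\le -1$, the map $t\mapsto t^{1/p}$ is operator monotone decreasing (since $1/p\in[-1,0)$), which reverses inequalities and explains the swapped $K(h,p)^{\pm 1/p}$ factors in \eqref{eq:MO2}. For $1/2\le p\le 1$, part (1) of Lemma~\ref{lem-RJ} combined with the operator convexity of $t\mapsto t^{1/p}$ (since $1/p\in[1,2]$) again yields a single factor $K(h,p)^{\pm 1/p}$. In the remaining intervals $-1<p<0$ and $0<p<1/2$, the exponent $1/p$ lies outside $[-1,0]\cup[1,2]$, so $t\mapsto t^{1/p}$ is neither operator monotone nor operator convex. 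In these cases the Kantorovich sandwich must be applied to both $\Phi(X^p)^{1/p}$ and $\Phi((A+B)^p)^{1/p}$, which doubles the loss and produces the $K(h,p)^{\pm 2/p}$ factor of \eqref{eq:MO3}.

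\textbf{Main obstacle.} The delicate step is controlling the $1/p$-th power when $t\mapsto t^{1/p}$ fails to be operator monotone on $(0,\infty)$, which happens precisely for $p\in(0,1)\cup(-1,0)$. In those ranges inequalities cannot be passed through the exponent $1/p$ directly, and one must invoke the two-sided Kantorovich bound on both sides, thereby picking up the additional $K(h,p)$-factor that distinguishes \eqref{eq:MO3} from \eqref{eq:MO1}--\eqref{eq:MO2}. Managing this case split cleanly, rather than any single computation, is the real content of the theorem.
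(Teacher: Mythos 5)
Your overall strategy is the paper's: feed the reverse Jensen estimates of Lemma~\ref{lem-RJ} into the additivity $\Phi(A+B)=\Phi(A)+\Phi(B)$, with the case split governed by where $1/p$ falls, and your explanation of the squared constant in \eqref{eq:MO3} is exactly right. But there is a concrete gap in your model case. For $p>2$ you need $\Phi(X)\le\Phi(X^p)^{1/p}$ (twice: for $X=A,B$ on the right half of \eqref{eq:MO1} and for $X=A+B$ on the left half), and your justification --- $\Phi(X)^p\le\Phi(X^p)$ from Lemma~\ref{lem-RJ}(2) --- is, as you yourself note parenthetically, only available for $1\le p\le 2$. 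For $p>2$ the inequality $\Phi(X)^p\le\Phi(X^p)$ is false in general (that is precisely why part (3) of Lemma~\ref{lem-RJ} carries the extra factor $K(h,p)^{-1}$ on the left), so your route as written only yields the weaker constant $K(h,p)^{\pm 2/p}$ in \eqref{eq:MO1} when $p>2$. The same defect lurks in your treatment of $p\le -1$: applying the lemma with exponent $p\le -1$ forces you into the two-sided part (3), and passing that through the operator monotone decreasing map $t\mapsto t^{1/p}$ again produces $K(h,p)^{\pm 2/p}$ rather than the claimed $K(h,p)^{\pm 1/p}$ of \eqref{eq:MO2}.

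The fix is the paper's actual device: never apply Lemma~\ref{lem-RJ} with the exponent $p$, but always with the exponent $1/p$, to the operators $A^p$, $B^p$ and $(A+B)^p$. For $p\ge 1$ one has $1/p\in(0,1]$, so part (1) gives $\Phi(X)=\Phi((X^p)^{1/p})\le\Phi(X^p)^{1/p}$ and $K(h^p,1/p)\Phi(X^p)^{1/p}\le\Phi(X)$ for every $p\ge 1$, with no splitting at $p=2$; for $p\le -1$ or $1/2\le p\le 1$ one has $1/p\in[-1,0)\cup[1,2]$ and part (2) applies, costing a constant on only one side; and only for $p\in(-1,0)\cup(0,1/2)$ does $1/p$ land in the two-sided regime of part (3), which is what produces the exponent $2/p$. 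The constants are then reconciled via the identity $K(h,p)^{1/p}=K(h^p,1/p)^{-1}$ from Lemma~\ref{lem-KS}. With this adjustment your plan goes through.
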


\begin{proof}
First assume that  $p\geq 1$. Then $0<\frac{1}{p}\leq 1$ and  it follows from (1) of Lemma~\ref{lem-RJ} that $\Phi(A^{\frac{1}{p}}) \leq \Phi(A)^{\frac{1}{p}}$. By replacing $A$ by $A^p$ in both sides, we have $\Phi(A)\leq \Phi(A^p)^{\frac{1}{p}}$ and hence
\begin{align}\label{n}
\Phi(A+B) \leq \Phi(A^p)^{\frac{1}{p}}+\Phi(B^p)^{\frac{1}{p}}.
\end{align}
 Since $2mI\leq A+B\leq 2MI$, the generalized condition number of $A+B$ is $h(A+B)=\frac{2M}{2m}=h$ and so we conclude from (1) of Lemma~\ref{lem-RJ} that
\[
K\left(h,1/p\right)\Phi(A+B)^{\frac{1}{p}}\leq \Phi((A+B)^{\frac{1}{p}}).
\]
Replacing $A+B$ by $(A+B)^p$, we have
\[
K(h^p,1/p)\Phi((A+B)^p)^{\frac{1}{p}}\leq \Phi(A+B).
\]
Since $K(h,p)^{\frac{1}{p}}=K(h^p,1/p)^{-1}$ by (4) of Lemma~\ref{lem-KS}, it follows that
\begin{align*}
\Phi((A+B)^p)^{\frac{1}{p}} & \leq K(h^p,1/p)^{-1}\Phi(A+B) \\
& \leq  K(h,p)^{\frac{1}{p}}\left[ \Phi(A^p)^{\frac{1}{p}}+\Phi(B^p)^{\frac{1}{p}}\right],
\end{align*}
where the last inequality comes from \eqref{n}. This gives the second inequality of \eqref{eq:MO1}.\par
Then note that   Lemma~\ref{lem-RJ} and $0<\frac{1}{p}\leq 1$  imply that $\Phi(A+B)\leq \Phi((A+B)^p)^{\frac{1}{p}}$ and $K(h^p,\frac{1}{p})\Phi(A^p)^{\frac{1}{p}}\leq \Phi(A)$, and so
\[
K(h^p,\frac{1}{p})\left[ \Phi(A^p)^{\frac{1}{p}}+\Phi(B^p)^{\frac{1}{p}}\right] \leq \Phi(A)+\Phi(B) = \Phi(A+B) \leq \Phi((A+B)^p)^{\frac{1}{p}}.
\]
Hence we have the first inequality of \eqref{eq:MO1}.\par
Next assume that    $p\leq -1$ or \ $\frac{1}{2}\leq p\leq 1$ so that  $-1\leq \frac{1}{p}<0$ or  $1\leq \frac{1}{p}\leq 2$.  It follows from (2) of Lemma~\ref{lem-RJ} that $\Phi((A+B)^p)^{\frac{1}{p}}\leq \Phi(A+B)$ and $\Phi(A)\leq K(h^p,\frac{1}{p})\Phi(A^p)^{\frac{1}{p}}$, and thus we have the second inequality of \eqref{eq:MO2}:
\begin{align*}
\Phi((A+B)^p)^{\frac{1}{p}}& \leq K(h^p,\frac{1}{p}) \left[ \Phi(A^p)^{\frac{1}{p}}+\Phi(B^p)^{\frac{1}{p}}\right] \\
& = K(h,p)^{-\frac{1}{p}} \left[ \Phi(A^p)^{\frac{1}{p}}+\Phi(B^p)^{\frac{1}{p}}\right].
\end{align*}
Moreover, another use of  (2) of Lemma~\ref{lem-RJ} gives us
\begin{align*}
\Phi(A^p)^{\frac{1}{p}}+\Phi(B^p)^{\frac{1}{p}} & \leq \Phi(A)+\Phi(B) = \Phi(A+B) \\
& \leq K(h^p,\frac{1}{p})\Phi((A+B)^p)^{\frac{1}{p}}\\
& = K(h,p)^{-\frac{1}{p}} \Phi((A+B)^p)^{\frac{1}{p}},
\end{align*}
whence  we have the first inequality of \eqref{eq:MO2}.\par
Finally,  if $-1<p<0$ or \ $0<p<\frac{1}{2}$, then $\frac{1}{p}<-1$ or \ $\frac{1}{p}>2$ and    (3) of Lemma~\ref{lem-RJ} yields  that $\Phi(A^p)^{\frac{1}{p}}\leq K(h^p,\frac{1}{p}) \Phi(A)$  and hence
\[
K(h^p,\frac{1}{p})^{-1}\left[ \Phi(A^p)^{\frac{1}{p}}+\Phi(B^p)^{\frac{1}{p}}\right] \leq \Phi(A)+\Phi(B)=\Phi(A+B).
 \]
Since $\Phi(A+B)\leq K(h^p,\frac{1}{p})\Phi((A+B)^p)^{\frac{1}{p}}$, we have
\[
K(h^p,\frac{1}{p})^{-1}\left[ \Phi(A^p)^{\frac{1}{p}}+\Phi(B^p)^{\frac{1}{p}}\right]\leq K(h^p,\frac{1}{p})\Phi((A+B)^p)^{\frac{1}{p}}
\]
and we get
\[
K(h,p)^{\frac{2}{p}}\left[ \Phi(A^p)^{\frac{1}{p}}+\Phi(B^p)^{\frac{1}{p}}\right] \leq \Phi((A+B)^p)^{\frac{1}{p}}.
\]
This concludes  the first inequality of \eqref{eq:MO3}. Utilizing  (3) of Lemma~\ref{lem-RJ} once more we obtain
 \[
K(h^p,\frac{1}{p})^{-1}\Phi((A+B)^p)^{\frac{1}{p}} \leq \Phi(A)+\Phi(B) \leq K(h^p,\frac{1}{p}) \left[ \Phi(A^p)^{\frac{1}{p}}+\Phi(B^p)^{\frac{1}{p}}\right].
\]
Therefore
\begin{align*}
\Phi((A+B)^p)^{\frac{1}{p}} & \leq K(h^p,\frac{1}{p})^2 \left[ \Phi(A^p)^{\frac{1}{p}}+\Phi(B^p)^{\frac{1}{p}}\right]\\
& = K(h,p)^{-\frac{2}{p}} \left[ \Phi(A^p)^{\frac{1}{p}}+\Phi(B^p)^{\frac{1}{p}}\right]
\end{align*}
from which  we have the second inequality of \eqref{eq:MO3}.\par
Therefore, the proof of Theorem~\ref{thm-MO} is complete.
\end{proof}

As an application of Theorem~\ref{thm-MO}, we have the following complementary inequalities of the Minkowski's operator sum inequalities \eqref{eq:OM1}:
\begin{corollary}
If positive invertible operators $A_1,\ldots,A_n$ and $B_1,\ldots,B_n$ satisfy the condition $mI\leq A_i, B_i \leq MI$ for all $i=1,\ldots,n$ and some scalars $0<m<M$, then
\begin{align*}
 K(h,p)^{-\frac{1}{p}} \left[ \left( \sum_{i=1}^n A_i^p\right)^{\frac{1}{p}} + \left( \sum_{i=1}^n B_i^p\right)^{\frac{1}{p}}\right] & \leq \left( \sum_{i=1}^n (A_i+B_i)^p \right)^{\frac{1}{p}} \\
& \leq K(h,p)^{\frac{1}{p}} \left[ \left( \sum_{i=1}^n A_i^p\right)^{\frac{1}{p}} + \left( \sum_{i=1}^n B_i^p\right)^{\frac{1}{p}}\right]
\end{align*}
for all $p\geq 1$, where $h=M/m$ and the generalized Kantorovich constant $K(h,p)$ is defined by \eqref{eq:K}.
\end{corollary}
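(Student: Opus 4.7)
The plan is to derive the corollary by a straightforward application of Theorem~\ref{thm-MO} to the block-diagonal construction already mentioned in the paragraph preceding that theorem. Specifically, I would consider the direct-sum Hilbert space $\mathscr{K}=\mathscr{H}^{(n)}=\mathscr{H}\oplus\cdots\oplus\mathscr{H}$ and the unital positive linear map
\[
\Phi:\mathcal{B}(\mathscr{K})\to\mathcal{B}(\mathscr{H}),\qquad \Phi(X_1\oplus\cdots\oplus X_n)=\tfrac{1}{n}(X_1+\cdots+X_n).
\]
Set $\mathbb{A}=A_1\oplus\cdots\oplus A_n$ and $\mathbb{B}=B_1\oplus\cdots\oplus B_n$. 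The hypothesis $mI\leq A_i, B_i\leq MI$ on each summand gives $mI_\mathscr{K}\leq \mathbb{A},\mathbb{B}\leq MI_\mathscr{K}$, so the generalized condition number of $\mathbb{A}$ and $\mathbb{B}$ is the same $h=M/m$ used in Theorem~\ref{thm-MO}.

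Next I would use the fact that the continuous functional calculus respects direct sums, namely $\mathbb{A}^p=A_1^p\oplus\cdots\oplus A_n^p$, and similarly
\[
(\mathbb{A}+\mathbb{B})^p=(A_1+B_1)^p\oplus\cdots\oplus (A_n+B_n)^p.
\]
Applying $\Phi$ then yields
\[
\Phi(\mathbb{A}^p)=\tfrac{1}{n}\sum_{i=1}^n A_i^p,\qquad \Phi(\mathbb{B}^p)=\tfrac{1}{n}\sum_{i=1}^n B_i^p,\qquad \Phi((\mathbb{A}+\mathbb{B})^p)=\tfrac{1}{n}\sum_{i=1}^n (A_i+B_i)^p,
\]
and therefore, on taking the $1/p$-th power,
\[
\Phi(\mathbb{A}^p)^{1/p}=n^{-1/p}\Bigl(\sum_{i=1}^n A_i^p\Bigr)^{1/p},
\]
with the analogous identities for $\mathbb{B}$ and $\mathbb{A}+\mathbb{B}$.

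The conclusion follows by substituting these three identities into the $p\geq 1$ case of Theorem~\ref{thm-MO}, namely the inequality
\[
K(h,p)^{-1/p}\bigl[\Phi(\mathbb{A}^p)^{1/p}+\Phi(\mathbb{B}^p)^{1/p}\bigr] \leq \Phi((\mathbb{A}+\mathbb{B})^p)^{1/p} \leq K(h,p)^{1/p}\bigl[\Phi(\mathbb{A}^p)^{1/p}+\Phi(\mathbb{B}^p)^{1/p}\bigr].
\]
The common factor $n^{-1/p}$ appears on both sides of each inequality and simply cancels, producing exactly the stated two-sided bound. There is essentially no analytic obstacle here; the only things to verify carefully are the direct-sum compatibility with the functional calculus (immediate from the block-diagonal structure) and that the same constants $m, M, h$ transfer from the components to the direct sum, so the application of Theorem~\ref{thm-MO} is legitimate.
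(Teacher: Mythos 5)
Your proof is correct and is exactly the route the paper intends: the corollary is obtained from Theorem~\ref{thm-MO} via the unital positive linear map $\Phi(X_1\oplus\cdots\oplus X_n)=\frac{1}{n}(X_1+\cdots+X_n)$ on the direct sum, which the paper itself sets up in the paragraph preceding that theorem. The verification that $mI\leq\mathbb{A},\mathbb{B}\leq MI$ transfers the same $h=M/m$, and the cancellation of the common factor $n^{-1/p}$, are both handled correctly.
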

\par
\medskip

 Fuglede-Kadison \cite{FK1} and Arveson \cite{Arveson} introduced the normalized determinant for an invertible operator  $A$ in II$_1$-factors with the canonical trace $\tau$:
\[
\Delta_{\tau}(A)=\exp \tau(\log |A|).
\]
Following this, in \cite{FSeo,FIS}, the normalized determinant $\Delta_{\varphi}$ for a positive invertible operator  $A$ and a fixed vector state $\varphi$ is defined by
\[
\Delta_{\varphi}(A) = \exp \varphi(\log A)
\]
as a continuous geometric mean. Along this line, Fujii, Nakamura and Seo \cite{FNS} considered an operator valued determinant $\Delta_{\Phi}$ defined by
\[
\Delta_{\Phi}(A)=\exp \Phi(\log A)
\]
where $\Phi$ is a unital positive linear map. We list some properties of the operator valued determinant: (i) continuity: The map $A\mapsto \Delta_{\Phi}(A)$ is norm continuous. (ii) bounds: $\NORM{A^{-1}}^{-1}\leq \Delta_{\Phi}(A) \leq \NORM{A}$. (iii) power equality: $\Delta_{\Phi}(A^t)=\Delta_{\Phi}(A)^t$ for all real numbers $t$. (iv) homogeneity: $\Delta_{\Phi}(tA)=t\Delta_{\Phi}(A)$ for all positive numbers $t$.\par
 If $A$ and $B$ are positive definite matrices in ${\Bbb M}_n$, then it is known that the following determinant inequalities hold:
\begin{align}\label{det1}
\det(A+B) \geq \det(A) + \det(B)
\end{align}
and
\begin{align}\label{det2}
\det(A+B)^{\frac{1}{n}} \geq \det(A)^{\frac{1}{n}} + \det(B)^{\frac{1}{n}}.
 \end{align}
 The   inequality \eqref{det2} is a Minkowski's determinant inequality,
 see \cite[Theorem 7.8.21]{HJ}. As an application of Theorem~\ref{thm-MO}, we present
 a variant of \eqref{det1} for operator valued determinants, which is
 an estimate of   operator valued determinant.
\begin{corollary} \label{thm-OVD}
Let $\Phi:\mathcal{B}(\mathscr{H})\mapsto \mathcal{B}(\mathscr{K})$ be a unital positive linear map and let $A$ and $B$ be positive invertible operators such that $mI\leq A, B \leq MI$ for some scalars $0<m<M$.   Then
\begin{equation} \label{eq:OVD}
S(h)^{-2}\left[ \Delta_{\Phi}(A)+\Delta_{\Phi}(B) \right] \leq \Delta_{\Phi}(A+B) \leq S(h)^2\left[ \Delta_{\Phi}(A)+\Delta_{\Phi}(B) \right]
\end{equation}
where the Specht ratio $S(h)$ is defined by \eqref{eq:S}.
\end{corollary}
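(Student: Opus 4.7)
The plan is to derive \eqref{eq:OVD} by letting $p \to 0^+$ in the two-sided estimate \eqref{eq:MO3} of Theorem~\ref{thm-MO}. The hypothesis $mI\leq A,B\leq MI$ already matches that of Theorem~\ref{thm-MO}, and since $2mI\leq A+B\leq 2MI$ the condition number of $A+B$ equals $h$ as well; hence for every $p\in(0,1/2)$ one has
\[
K(h,p)^{2/p}\!\left[\Phi(A^p)^{1/p}+\Phi(B^p)^{1/p}\right] \leq \Phi((A+B)^p)^{1/p} \leq K(h,p)^{-2/p}\!\left[\Phi(A^p)^{1/p}+\Phi(B^p)^{1/p}\right].
\]

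I would then verify the operator-side limit. For any positive invertible operator $X$ with $cI\leq X\leq CI$, the functional calculus gives the norm expansion $X^p=I+p\log X+O(p^2)$ near $p=0$, whence $\Phi(X^p)=I+p\,\Phi(\log X)+O(p^2)$. Taking the principal logarithm and dividing by $p$ shows $\tfrac{1}{p}\log\Phi(X^p)\to\Phi(\log X)$ in norm, so $\Phi(X^p)^{1/p}\to \exp\Phi(\log X)=\Delta_\Phi(X)$. Applied to $X=A$, $X=B$, and $X=A+B$, the three operator-valued terms in the display converge in norm to $\Delta_\Phi(A)$, $\Delta_\Phi(B)$, and $\Delta_\Phi(A+B)$, respectively.

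Next I would compute the scalar limits of $K(h,p)^{\pm 2/p}$ as $p\to 0^+$. Identity (4) of Lemma~\ref{lem-KS}, taken with $r=1$, gives $K(h,p)^{1/p}=K(h^p,1/p)^{-1}$, and identity (5) with its $p$ and $r$ there replaced respectively by $1$ and $p$ yields $\lim_{p\to 0^+}K(h^p,1/p)=S(h)$. Consequently $K(h,p)^{1/p}\to S(h)^{-1}$, and squaring gives $K(h,p)^{2/p}\to S(h)^{-2}$ together with $K(h,p)^{-2/p}\to S(h)^{2}$. Passing to the limit $p\to 0^+$ on both sides of the displayed inequality — an ordering preserved by norm convergence of self-adjoint operators — produces exactly \eqref{eq:OVD}. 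The only delicate point I anticipate is the scalar limit for the generalized Kantorovich constant, but it reduces to a direct combination of parts (4) and (5) of Lemma~\ref{lem-KS}; everything else is routine continuous functional calculus.
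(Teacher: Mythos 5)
Your proposal is correct and follows essentially the same route as the paper: let $p\to 0^{+}$ in inequality \eqref{eq:MO3} of Theorem~\ref{thm-MO}, using $\Phi(X^{p})^{1/p}\to\exp\Phi(\log X)=\Delta_{\Phi}(X)$ (which the paper simply cites as known, while you supply the standard expansion argument) together with $K(h,p)^{1/p}=K(h^{p},1/p)^{-1}\to S(h)^{-1}$ from parts (4) and (5) of Lemma~\ref{lem-KS}. Both the identification of the correct case of Theorem~\ref{thm-MO} and the two limit computations match the paper's proof.
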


\begin{proof}
It is known that $\Phi(A^p)^{\frac{1}{p}} \mapsto \exp \Phi(\log A)$ as $p\to 0$ for every unital positive linear map $\Phi$ and every positive invertible operator $A$. By (5) of Lemma~\ref{lem-KS}, we have $K(h,p)^{\frac{1}{p}}=K(h^p,\frac{1}{p})^{-1} \to S(h)^{-1}$ as $p\to 0$, whence Corollary~\ref{thm-OVD} follows from Theorem~\ref{thm-MO}.
\end{proof}
\par
\medskip

  The next result   provides an   estimation for operator valued determinant  in the sense of  Minkowski type operator inequality. In particular, it gives a variant of \eqref{det2} for   operator valued determinants.
 \begin{theorem} \label{thm-MOVD}
Let $\Phi:\mathcal{B}(\mathscr{H})\mapsto \mathcal{B}(\mathscr{K})$ be a unital positive linear map and let $A$ and $B$ be positive invertible operators such that $mI\leq A, B \leq MI$ for some scalars $0<m<M$. Put $h=M/m$. Then
\begin{equation} \label{eq:MOVD}
2^{1-\frac{1}{p}}S(h^{\frac{1}{p}})^{-3} K(h,1/p) \Delta_{\Phi}(A+B)^{\frac{1}{p}}\leq
\Delta_{\Phi}(A)^{\frac{1}{p}}+\Delta_{\Phi}(B)^{\frac{1}{p}} \leq 2^{1-\frac{1}{p}} S(h^{\frac{1}{p}})^3   \Delta_{\Phi}(A+B)^{\frac{1}{p}}
\end{equation}
for all $p\geq 1$, where the Specht ratio $S(h)$ is defined by \eqref{eq:S} and the generalized Kantorovich constant $K(h,p)$ is defined by \eqref{eq:K}.
\end{theorem}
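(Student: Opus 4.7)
The strategy is to reduce Theorem~\ref{thm-MOVD} to the already-established Corollary~\ref{thm-OVD} by exploiting the power equality $\Delta_\Phi(A^{1/p}) = \Delta_\Phi(A)^{1/p}$, and then to control the transition between the sum of powers $A^{1/p}+B^{1/p}$ and the power of the sum $(A+B)^{1/p}$ by bridging through the ordinary positive linear map $\Phi$.

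Concretely, I would first apply Corollary~\ref{thm-OVD} to the operators $A^{1/p}$ and $B^{1/p}$, which have spectra in $[m^{1/p},M^{1/p}]$ and thus common generalized condition number $h^{1/p}$. After rewriting both sides by power equality, this gives
\[
S(h^{1/p})^{-2}\left[\Delta_\Phi(A)^{1/p}+\Delta_\Phi(B)^{1/p}\right] \le \Delta_\Phi(A^{1/p}+B^{1/p}) \le S(h^{1/p})^{2}\left[\Delta_\Phi(A)^{1/p}+\Delta_\Phi(B)^{1/p}\right].
\]
Since $t\mapsto t^{1/p}$ is operator concave for $p\ge1$ and vanishes at $0$, I would then record the operator sandwich $(A+B)^{1/p} \le A^{1/p}+B^{1/p} \le 2^{1-1/p}(A+B)^{1/p}$, and transfer it across the non-monotone operation $\Delta_\Phi$ using three ingredients: the two-sided Specht estimate $\Delta_\Phi(X)\le\Phi(X)\le S(h_X)\Delta_\Phi(X)$ applied to the positive operators $A^{1/p}+B^{1/p}$ and $(A+B)^{1/p}$ (both of which have condition number $h^{1/p}$); the Jensen/reverse-Jensen pair $K(h,1/p)\Phi(A+B)^{1/p}\le \Phi((A+B)^{1/p}) \le \Phi(A+B)^{1/p}$ supplied by Lemma~\ref{lem-RJ}(1) for the exponent $1/p\in(0,1]$; and the elementary inequality $\Phi(A+B)^{1/p} \ge \Delta_\Phi(A+B)^{1/p}$, which comes from Jensen's inequality $\Phi(A+B)\ge \Delta_\Phi(A+B)$ followed by operator monotonicity of $t^{1/p}$. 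For the upper bound this chain produces $\Delta_\Phi(A^{1/p}+B^{1/p}) \le 2^{1-1/p}S(h^{1/p})\,\Delta_\Phi(A+B)^{1/p}$, which combined with the left half of the display above yields the stated upper estimate; the lower bound follows from the mirror chain, in which the reverse-Jensen constant $K(h,1/p)$ enters through Lemma~\ref{lem-RJ}.

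The principal obstacle is this bridging step, because the operator inequalities $(A+B)^{1/p} \le A^{1/p}+B^{1/p} \le 2^{1-1/p}(A+B)^{1/p}$ do \emph{not} descend to comparable inequalities between $\Delta_\Phi((A+B)^{1/p})$ and $\Delta_\Phi(A^{1/p}+B^{1/p})$: the exponential that appears in $\Delta_\Phi = \exp\Phi\log(\cdot)$ is not operator monotone, so the detour through $\Phi$ and the Specht ratio is indispensable and must be done with matching condition numbers on both sides. The scalar factor $2^{1-1/p}$ visible in the conclusion is precisely the slack in the concavity bound $A^{1/p}+B^{1/p}\le 2^{1-1/p}(A+B)^{1/p}$, which is sharp at $A=B$, and this is why it accompanies the spectral constants $S(h^{1/p})$ and $K(h,1/p)$ in the final estimate.
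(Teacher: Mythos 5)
Your overall architecture coincides with the paper's: both arguments feed $A^{1/p}$ and $B^{1/p}$ (common condition number $h^{1/p}$) into Corollary~\ref{thm-OVD}, invoke the power equality $\Delta_\Phi(A^{1/p})=\Delta_\Phi(A)^{1/p}$, and thereby reduce \eqref{eq:MOVD} to a two-sided comparison of $\Delta_\Phi(A^{1/p}+B^{1/p})$ with $\Delta_\Phi(A+B)^{1/p}$. The gap is in how you carry out that comparison. You rest it on the operator inequality $\Delta_\Phi(X)\le\Phi(X)$, used once for $X=A^{1/p}+B^{1/p}$ and once (called ``Jensen's inequality'') for $X=A+B$. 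This is false in general in the L\"owner order: it amounts to $e^{\Phi(Y)}\le\Phi(e^Y)$ with $Y=\log X$, i.e.\ a Choi--Davis--Jensen inequality for $\exp$, and $\exp$ is not operator convex. Concretely, taking $\Phi(X_1\oplus X_2)=\tfrac12(X_1+X_2)$ turns the claim into the midpoint operator convexity $e^{(Y_1+Y_2)/2}\le\tfrac12\left(e^{Y_1}+e^{Y_2}\right)$, which fails for suitable non-commuting self-adjoint $Y_1,Y_2$. What is true is only the chaotic-order statement $\Phi(\log X)\le\log\Phi(X)$, which does not descend to the L\"owner order. (The other half of your sandwich, $\Phi(X)\le S(h_X)\Delta_\Phi(X)$, is fine: with $m_XI\le X\le M_XI$ it follows from the two scalar inequalities $e^t\le\mu t+\nu\le S(h_X)e^t$ on $[\log m_X,\log M_X]$, the first applied under $\Phi$ and the second by functional calculus to $\Phi(\log X)$.) The paper avoids the false half entirely: it first establishes the operator sandwich $2^{1-1/p}K(h,1/p)(A+B)^{1/p}\le A^{1/p}+B^{1/p}\le 2^{1-1/p}(A+B)^{1/p}$, applies the operator monotone function $\log$ and then $\Phi$, and only afterwards exponentiates --- not as an operator map but at the level of quadratic forms, combining the scalar Specht estimate $\langle e^Xx,x\rangle\le S(e^{M-m})e^{\langle Xx,x\rangle}$ with the scalar Jensen inequality $e^{\langle Yx,x\rangle}\le\langle e^Yx,x\rangle$; since the two extreme members of the resulting chain are quadratic forms of fixed operators, an operator inequality between $\Delta_\Phi(A^{1/p}+B^{1/p})$ and $\Delta_\Phi((A+B)^{1/p})$ is recovered at the end. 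Some such detour through $\Phi\circ\log$ and scalar estimates is unavoidable; the plain sandwich $\Delta_\Phi(X)\le\Phi(X)\le S(h_X)\Delta_\Phi(X)$ is not available.

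A secondary point: even granting your bridge, your lower-bound chain uses the constant-free subadditivity $(A+B)^{1/p}\le A^{1/p}+B^{1/p}$ rather than the reverse Jensen bound $A^{1/p}+B^{1/p}\ge 2^{1-1/p}K(h,1/p)(A+B)^{1/p}$ coming from Lemma~\ref{lem-RJ}(1) applied to the two-point average. Tracing your constants gives $S(h^{1/p})^{-3}K(h,1/p)$ on the left of \eqref{eq:MOVD} instead of $2^{1-1/p}S(h^{1/p})^{-3}K(h,1/p)$, which is strictly weaker for $p>1$ since $2^{1-1/p}\ge 1$.
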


\begin{proof}
If $p\geq 1$,  then $0<\frac{1}{p}\leq 1$ and it follows from (1) of Lemma~\ref{lem-RJ} that
\[
K(h,\frac{1}{p}) \left( \frac{A+B}{2}\right)^{\frac{1}{p}} \leq \frac{A^{\frac{1}{p}}+B^{\frac{1}{p}}}{2} \leq \left( \frac{A+B}{2}\right)^{\frac{1}{p}},
\]
whence
\[
2^{1-\frac{1}{p}}\cdot K(h,\frac{1}{p}) (A+B)^{\frac{1}{p}} \leq A^{\frac{1}{p}}+B^{\frac{1}{p}}\leq 2^{1-\frac{1}{p}}(A+B)^{\frac{1}{p}}.
\]
Since $t\mapsto\log t$ is operator monotone, we have
\begin{equation} \label{eq:ab}
\log 2^{1-\frac{1}{p}}\cdot K(h,\frac{1}{p})+\Phi(\log (A+B)^{\frac{1}{p}}) \leq
\Phi(\log(A^{\frac{1}{p}}+B^{\frac{1}{p}}))\leq \log 2^{1-\frac{1}{p}}+\Phi(\log (A+B)^{\frac{1}{p}})
\end{equation}
for every unital positive linear map $\Phi$.   It is known in \cite[Theorem 2.15]{bk-MPS2} that if $X$ is a self-adjoint operator with the condition number $h=M/m$, then $\langle \exp A x,x\rangle\leq S\left(e^{M-m}\right)\exp\langle  A x,x\rangle$ holds for every unit vector $x\in\mathscr{H}$. Noting  that  $(\log 2m^{\frac{1}{p}})I \leq \Phi\left(\log(A^{\frac{1}{p}}+B^{\frac{1}{p}})\right)\leq  (\log 2M^{\frac{1}{p}})I$, this implies that
{\small \begin{align*}
\< \exp \Phi\left(\log(A^{\frac{1}{p}}+B^{\frac{1}{p}})\right)x,x\> & \leq S\left(\exp\left(\log 2M^{\frac{1}{p}}-\log 2m^{\frac{1}{p}}\right)\right) \exp \< \Phi\left(\log(A^{\frac{1}{p}}+B^{\frac{1}{p}})\right)x,x\> \\
& \leq S(h^{\frac{1}{p}}) \exp \< \left[ \log 2^{1-\frac{1}{p}}+\Phi\left(\log (A+B)^{\frac{1}{p}}\right)\right] x,x\> \\
& \qquad \qquad \qquad \qquad \qquad \qquad \mbox{by the second inequality of \eqref{eq:ab}} \\
& \leq S(h^{\frac{1}{p}}) 2^{1-\frac{1}{p}} \< \exp \Phi\left(\log(A+B)^{\frac{1}{p}}\right)x,x\>
\end{align*}}
for every unit vector $x\in \mathscr{H}$, where the last inequality follows from the   Jensen inequality.  Therefore
\[
\Delta_{\Phi}(A^{\frac{1}{p}}+B^{\frac{1}{p}})\leq 2^{1-\frac{1}{p}} S(h^{\frac{1}{p}}) \Delta_{\Phi}((A+B)^{\frac{1}{p}}).
\]
Noting that Lemma \ref{lem-KS} says that $K(h,p)^{2/p}=K(h^p,1/p)^{-2}$ tends to $S(h)^{-2}$, when $p\to 0$,    it follows from \eqref{eq:MO3} of Theorem~\ref{thm-MO} that
\begin{align}\label{eq:K1}
  S(h)^{-2} \left[\Delta_{\Phi}(A)+\Delta_{\Phi}(B)\right]\leq \Delta_{\Phi}(A+B).
\end{align}
Since $\Delta_{\Phi}(A^{\frac{1}{p}})=\Delta_{\Phi}(A)^{\frac{1}{p}}$ by the power equality of $\Delta_{\Phi}$,  replacing $A$ and $B$ in \eqref{eq:K1}, respectively by $A^{1/p}$ and $B^{1/p}$, we deduce
\[
S(h^{\frac{1}{p}})^{-2}\left[ \Delta_{\Phi}(A)^{\frac{1}{p}}+\Delta_{\Phi}(B)^{\frac{1}{p}} \right] \leq
\Delta_{\Phi}(A^{\frac{1}{p}}+B^{\frac{1}{p}})\leq 2^{1-\frac{1}{p}} S(h^{\frac{1}{p}}) \Delta_{\Phi}(A+B)^{\frac{1}{p}}
\]
and thus we have the second inequality of \eqref{eq:MOVD}:
\[
\Delta_{\Phi}(A)^{\frac{1}{p}}+\Delta_{\Phi}(B)^{\frac{1}{p}} \leq 2^{1-\frac{1}{p}} S(h^{\frac{1}{p}})^3 \Delta_{\Phi}(A+B)^{\frac{1}{p}}.
\]
For the first inequality of \eqref{eq:MOVD}, since $(\log(2m)^{\frac{1}{p}})I\leq \Phi(\log(A+B)^{\frac{1}{p}})\leq  (\log (2M)^{\frac{1}{p}})I$, it follows from the first inequality of \eqref{eq:ab} that
\[
\exp \Phi(\log(A+B)^{\frac{1}{p}}) \leq S(h^{\frac{1}{p}})2^{\frac{1}{p}-1}K(h,\frac{1}{p})^{-1} \exp \Phi(\log(A^{\frac{1}{p}}+B^{\frac{1}{p}}))
\]
and thus
\[
\Delta_{\Phi}(A^{\frac{1}{p}}+B^{\frac{1}{p}}) \geq S(h^{\frac{1}{p}})^{-1}2^{1-\frac{1}{p}}K(h,\frac{1}{p}) \Delta_{\Phi}((A+B)^{\frac{1}{p}}).
\]
By Corollary~\ref{thm-OVD} and the power equality of $\Delta_{\Phi}$, we have
\begin{align*}
\Delta_{\Phi}(A)^{\frac{1}{p}}+\Delta_{\Phi}(B)^{\frac{1}{p}} & = \Delta_{\Phi}(A^{\frac{1}{p}})+\Delta_{\Phi}(B^{\frac{1}{p}}) \\
& \geq S(h^{\frac{1}{p}})^{-2} \Delta_{\Phi}(A^{\frac{1}{p}}+B^{\frac{1}{p}}) \\
& \geq S(h^{\frac{1}{p}})^{-3} \cdot 2^{1-\frac{1}{p}}\cdot K(h,\frac{1}{p}) \Delta_{\Phi}(A+B)^{\frac{1}{p}}
\end{align*}
and thus we have the first inequality of \eqref{eq:MOVD}. The  proof is now complete. \par
\end{proof}


 Finally, as an application, we present complementary results of Minkowski type matrix trace inequalities due to Carlen and Lieb \cite{Crl-Lb,Crl-Lb2}, Bekjan \cite{Bkj}, Ando and Hiai \cite{AH}. Carlen and Lieb showed the following Minkowski type trace inequalities: For positive definite matrices $A_1,\ldots, A_n$ and $B_1,\ldots, B_n$,
\begin{equation} \label{eq:tm1}
\Tr\left[\left( \sum_{i=1}^n (A_i+B_i)^p\right)^{1/p}\right] \geq \Tr\left[\left(\sum_{i=1}^n A_i^p\right)^{1/p}\right] + \Tr\left[\left(\sum_{i=1}^n B_i^p\right)^{1/p}\right]
\end{equation}
for $0< p\leq 1$ and
\begin{equation} \label{eq:tm2}
\Tr\left[\left( \sum_{i=1}^n (A_i+B_i)^p\right)^{1/p}\right] \leq \Tr\left[\left(\sum_{i=1}^n A_i^p\right)^{1/p}\right] + \Tr\left[\left(\sum_{i=1}^n B_i^p\right)^{1/p}\right]
\end{equation}
for $1\leq p\leq 2$, and the trace function $\Tr\left[ (\sum_{i=1}^n A_i^p)^{1/p}\right]$ is neither convex nor concave for all $p>2$.\par Firstly, we  present \eqref{eq:tm1} and \eqref{eq:tm2} under  more general setting.
\begin{theorem} \label{thm-T}
Let $A$ and $B$ be positive definite matrices such that $mI\leq A,B\leq MI$ for some scalar $0<m<M$ and $h=M/m$. Let $\Phi$ be a unital positive linear map. Then
\begin{align} \label{eq:T-1}
K(h,p)^{1/p} \left(\Tr\left[ \Phi(A^p)^{1/p}\right]+\Tr\left[ \Phi(B^p)^{1/p}\right]\right) & \leq \Tr\left[ \Phi((A+B)^p)^{1/p}\right] \notag \\
& \leq K(h,p)^{-1/p}\left( \Tr\left[ \Phi(A^p)^{1/p}\right]+\Tr\left[ \Phi(B^p)^{1/p}\right]\right)
\end{align}
for all $p<0, 0<p\leq 1$, and
\begin{align*}
K(h,p)^{-1/p}\left(\Tr\left[ \Phi(A^p)^{1/p}\right]+\Tr\left[ \Phi(B^p)^{1/p}\right]\right) & \leq \Tr\left[ \Phi((A+B)^p)^{1/p}\right] \\
& \leq K(h,p)^{1/p}\left( \Tr\left[ \Phi(A^p)^{1/p}\right]+\Tr\left[ \Phi(B^p)^{1/p}\right]\right)
\end{align*}
for all $p\geq 1$, where $K(h,p)$ is the generalized Kantorovich constnat.
\end{theorem}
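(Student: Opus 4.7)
The plan is to reduce the theorem to a one-variable trace inequality and then compare $A$, $B$, and $A+B$. Concretely, I aim first to show
\begin{equation}\label{eq:plan-trace}
K(h,p)^{1/p}\,\Tr[\Phi(A)] \leq \Tr[\Phi(A^p)^{1/p}] \leq \Tr[\Phi(A)] \qquad\text{for } p\in(-\infty,0)\cup(0,1],
\end{equation}
with the opposite chain valid for $p\geq 1$. The essential device is the eigenvalue monotonicity $X\leq Y \Rightarrow \Tr[X^q]\leq \Tr[Y^q]$ for $q>0$ (and reversed for $q<0$), which follows from Weyl's inequality on eigenvalues and only requires the scalar (not operator) monotonicity of $t\mapsto t^q$; this is what lets us pass through the $(1/p)$-th power even when $1/p$ lies outside the operator-monotone range $[0,1]$.

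To prove \eqref{eq:plan-trace} I would split into four regimes according to the exponent range handled by Lemma \ref{lem-RJ}. For $0<p\leq 1$, Lemma \ref{lem-RJ}(1) gives $K(h,p)\Phi(A)^p\leq \Phi(A^p)\leq \Phi(A)^p$, and raising to the power $1/p>0$ under the trace yields \eqref{eq:plan-trace} at once. For $p\geq 1$ I instead apply Lemma \ref{lem-RJ}(1) to the operator $A^p$ (condition number $h^p$) with exponent $1/p\in(0,1]$, obtaining $K(h^p,1/p)\Phi(A^p)^{1/p}\leq \Phi(A)\leq \Phi(A^p)^{1/p}$, then convert $K(h^p,1/p)=K(h,p)^{-1/p}$ via Lemma \ref{lem-KS}(4). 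The two negative regimes are symmetric: for $-1\leq p<0$ I invoke Lemma \ref{lem-RJ}(2) with exponent $p$ directly and take the trace of the $1/p$-th power (with reversal, since $1/p<0$); for $p<-1$ I invoke Lemma \ref{lem-RJ}(2) applied to $A^p$ (condition number $h^{|p|}$) with exponent $1/p\in[-1,0)$, converting the constant using parts (1) and (4) of Lemma \ref{lem-KS}.

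Given \eqref{eq:plan-trace}, the theorem is a short chain: since $2mI\leq A+B\leq 2MI$ the operator $A+B$ also has condition number $h$, so \eqref{eq:plan-trace} applies uniformly to $A$, $B$, and $A+B$. For the upper bound when $p\in(-\infty,0)\cup(0,1]$,
\begin{align*}
\Tr[\Phi((A+B)^p)^{1/p}] & \leq \Tr[\Phi(A+B)] = \Tr[\Phi(A)] + \Tr[\Phi(B)] \\
& \leq K(h,p)^{-1/p}\bigl(\Tr[\Phi(A^p)^{1/p}] + \Tr[\Phi(B^p)^{1/p}]\bigr),
\end{align*}
the first inequality being the upper half of \eqref{eq:plan-trace} applied to $A+B$ and the last being the lower half rearranged for $A$ and $B$; the lower bound reverses this chain, and the case $p\geq 1$ is obtained by swapping the two halves of \eqref{eq:plan-trace}.

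The main subtlety is that the naive approach of simply taking the trace of Theorem \ref{thm-MO} produces the claimed $K(h,p)^{\pm 1/p}$ constants only in the ranges $p\geq 1$, $p\leq -1$ and $1/2\leq p\leq 1$; for $-1<p<0$ and $0<p<1/2$ it yields only $K(h,p)^{\pm 2/p}$, because Theorem \ref{thm-MO} relies there on Lemma \ref{lem-RJ}(3), which is intrinsically two-sided. The purpose of the single-variable reduction above is to sidestep Lemma \ref{lem-RJ}(3) altogether: one always chooses between the two available exponents $p$ and $1/p$ so as to land inside the one-sided ranges of parts (1) or (2) of Lemma \ref{lem-RJ}, and the trace's tolerance for scalar monotonicity of $t\mapsto t^{1/p}$ then closes the argument.
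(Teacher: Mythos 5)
Your proof is correct and follows essentially the same route as the paper: both reduce the theorem to the single-variable estimate $K(h,p)^{1/p}\,\Tr[\Phi(A)]\le\Tr[\Phi(A^p)^{1/p}]\le\Tr[\Phi(A)]$ (reversed for $p\ge1$), obtained from Lemma \ref{lem-RJ} applied to whichever of the exponents $p$ or $1/p$ lies in a one-sided range together with the identity $K(h,p)^{1/p}=K(h^p,1/p)^{-1}$, and then chain through $\Tr[\Phi(A+B)]=\Tr[\Phi(A)]+\Tr[\Phi(B)]$ using that $A+B$ has the same condition number $h$. The only cosmetic differences are that for the unweighted half the paper invokes the trace Jensen inequality for the convex function $x^{1/p}$ (and for $p\ge1$ simply takes the trace of \eqref{eq:MO1}), whereas you use Weyl eigenvalue monotonicity of $t\mapsto t^{q}$ under the trace throughout; your closing remark on why a direct trace of Theorem \ref{thm-MO} would only give $K(h,p)^{\pm2/p}$ for $-1<p<0$ and $0<p<1/2$ is accurate and is precisely why the paper, too, argues separately in that range.
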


\begin{proof}
Suppose that $p<0, 0<p\leq 1$. Since $1/p<0, 1/p\geq 1$, i.e., $f(x)=x^{1/p}$ is convex, we have
\[
\Tr\left[\Phi(A)^{1/p}\right] \leq \Tr\left[\Phi(A^{1/p})\right]
\]
and by replacing $A$ by $A^p$ in both sides, we have
\begin{equation} \label{eq:p1}
\Tr\left[\Phi(A^p)^{1/p}\right] \leq \Tr\left[\Phi(A)\right].
\end{equation}
Hence it follows that
\[
\Tr\left[\Phi(A^p)^{1/p}\right]+\Tr\left[\Phi(B^p)^{1/p}\right]\leq \Tr\left[\Phi(A)\right]+\Tr\left[\Phi(B)\right] = \Tr\left[\Phi(A+B)\right].
\]
By (2) and (3) of Lemma~\ref{lem-RJ}, we have
\[
\Phi((A+B)^{1/p}) \leq K(h,1/p)\Phi(A+B)^{1/p}
\]
and thus
\[
\Phi(A+B) \leq K(h^p,1/p)\Phi((A+B)^p)^{1/p}.
\]
Hence we have
\[
K(h^p,1/p)^{-1} \left(\Tr\left[ \Phi(A^p)^{1/p}\right]+\Tr\left[ \Phi(B^p)^{1/p}\right]\right) \leq \Tr\left[ \Phi((A+B)^p)^{1/p}\right] .
\]
This concludes the first inequality of \eqref{eq:T-1}, since $K(h,p)^{1/p}=K(h^p,1/p)^{-1}$.\par
By \eqref{eq:p1}, we have
\[
\Tr\left[ \Phi((A+B)^p)^{1/p}\right] \leq \Tr\left[ \Phi(A+B)\right].
\]
By (2) and (3) of Lemma~\ref{lem-RJ}, we have
\[
\Phi(A)+\Phi(B) \leq K(h^p,1/p)\left( \Phi(A^p)^{1/p}+\Phi(B^p)^{1/p}\right)
\]
and thus
\[
\Tr\left[ \Phi((A+B)^p)^{1/p}\right] \leq K(h^p,1/p)\left( \Tr\left[\Phi(A^p)^{1/p}\right]+\Tr\left[\Phi(B^p)^{1/p}\right] \right).
\]
This implies the second inequality of \eqref{eq:T-1}.\par
In the case of $p\geq 1$, it follows from \eqref{eq:MO1} of Theorem~\ref{thm-MO}.


\end{proof}

In particular, by Theorem~\ref{thm-T}, we show the reverse inequality of Minkowski type trace ones \eqref{eq:tm1} and \eqref{eq:tm2} for $0<p\leq 2$, and give estimates of the upper and lower bounds of Minkowski type ones for $p\geq 2$:

\begin{corollary}
Let $A$ and $B$ be positive definite matrices such that $mI\leq A,B\leq MI$ for some scalar $0<m<M$ and $h=M/m$. Then
{\small\begin{align*}
&  K(h,p)^{-1/p}\left(\Tr\left[\left(\sum_{i=1}^n A_i^p\right)^{1/p}\right] + \Tr\left[\left(\sum_{i=1}^n B_i^p\right)^{1/p}\right]\right) \leq \Tr\left[\left( \sum_{i=1}^n (A_i+B_i)^p\right)^{1/p}\right] \\
& \leq K(h,p)^{-1/p} \left( \Tr\left[\left(\sum_{i=1}^n A_i^p\right)^{1/p}\right] + \Tr\left[\left(\sum_{i=1}^n B_i^p\right)^{1/p}\right] \right)
\end{align*}}
for all $(-\infty,1]\backslash\{0\}$, and
{\small\begin{align*}
& K(h,p)^{-1/p} \left( \Tr\left[\left(\sum_{i=1}^n A_i^p\right)^{1/p}\right] + \Tr\left[\left(\sum_{i=1}^n B_i^p\right)^{1/p}\right] \right)\\
& \leq \Tr\left[\left( \sum_{i=1}^n (A_i+B_i)^p\right)^{1/p}\right]
\leq K(h,p)^{1/p}\left(\Tr\left[\left(\sum_{i=1}^n A_i^p\right)^{1/p}\right] + \Tr\left[\left(\sum_{i=1}^n B_i^p\right)^{1/p}\right]\right)
\end{align*}}
for all $p\geq1$, where $K(h,p)$ is the generalized Kantorovich constant.
\end{corollary}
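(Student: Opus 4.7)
The plan is to derive this corollary as a direct consequence of Theorem~\ref{thm-T} by passing to block-diagonal operators together with a very specific unital positive linear map. Concretely, I would form $\mathbb{A} = A_1 \oplus \cdots \oplus A_n$ and $\mathbb{B} = B_1 \oplus \cdots \oplus B_n$ acting on $\mathscr{H}^{\oplus n}$, and introduce
\[
\Phi(X_1 \oplus \cdots \oplus X_n) = \frac{1}{n}(X_1 + \cdots + X_n),
\]
which is trivially unital and positive linear. Reading the hypothesis of the corollary as $mI \leq A_i, B_i \leq MI$ for every $i$ (which is the only sensible way to make the bound meaningful in the presence of the index $i$), the block-diagonal structure immediately yields $mI \leq \mathbb{A}, \mathbb{B} \leq MI$ with the same $h = M/m$.

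Next I would carry out the functional-calculus bookkeeping block by block. Since the blocks decouple, $\mathbb{A}^p = \bigoplus_i A_i^p$, $\mathbb{B}^p = \bigoplus_i B_i^p$, and $(\mathbb{A}+\mathbb{B})^p = \bigoplus_i (A_i+B_i)^p$. Applying $\Phi$ and then the $p$-th root gives
\[
\Phi(\mathbb{A}^p)^{1/p} = n^{-1/p}\Bigl(\sum_{i=1}^n A_i^p\Bigr)^{1/p},\qquad
\Phi((\mathbb{A}+\mathbb{B})^p)^{1/p} = n^{-1/p}\Bigl(\sum_{i=1}^n (A_i+B_i)^p\Bigr)^{1/p},
\]
and analogously for $\mathbb{B}$. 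Taking traces produces a common prefactor $n^{-1/p}$ in every term of the inequality provided by Theorem~\ref{thm-T}.

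To conclude, I would invoke Theorem~\ref{thm-T} applied to $(\mathbb{A}, \mathbb{B}, \Phi)$. In the range $p \in (-\infty, 1]\setminus\{0\}$ the first inequality of Theorem~\ref{thm-T} applies directly, and in the range $p \geq 1$ the second one does. In each case the common factor $n^{-1/p}$ divides out from all three terms, leaving exactly the asserted bounds involving $K(h,p)^{\pm 1/p}$ and the three trace quantities $\Tr[(\sum A_i^p)^{1/p}]$, $\Tr[(\sum B_i^p)^{1/p}]$, $\Tr[(\sum (A_i+B_i)^p)^{1/p}]$.

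There is essentially no analytic obstacle: the whole argument is a structural reduction, and Theorem~\ref{thm-T} has already absorbed the work of handling the generalized Kantorovich constant and the various sign regimes of $p$. The only point that requires a moment of care is the interpretation of the norm bounds (they must be read as holding for each $A_i$ and $B_i$ so that the spectrum of the direct sum lies in $[m,M]$), together with checking that $K(h,p)$ depends only on $h = M/m$ and not on $n$, which is immediate from the definition \eqref{eq:K}.
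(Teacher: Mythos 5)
Your reduction via $\mathbb{A}=A_1\oplus\cdots\oplus A_n$, $\mathbb{B}=B_1\oplus\cdots\oplus B_n$ and the averaging map $\Phi(X_1\oplus\cdots\oplus X_n)=\frac{1}{n}\sum_i X_i$, with the common factor $n^{-1/p}$ cancelling from all three trace terms, is correct and is exactly the mechanism the paper sets up at the start of Section 4 and leaves implicit for this corollary. Note only that for $p\in(-\infty,1]\setminus\{0\}$ your argument (correctly) produces the lower-bound constant $K(h,p)^{1/p}$ from the first inequality of Theorem~\ref{thm-T}, so the printed $K(h,p)^{-1/p}$ on the left of the corollary's first display is a typo in the statement.
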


\par
\bigskip

\textbf{Acknowledgement.} The first author is  supported by a grant from the Iran National Science Foundation (INSF- No. 97018906).
 The second author is supported by Grant-in-Aid for Scientific Research (C), JSPS KAKENHI Grant Number JP 19K03542.

\par
\medskip


\end{document}